\DeclareFontFamily{OML}{rsfs}{\skewchar\font'177}
\DeclareFontShape{OML}{rsfs}{m}{n}{ <5> <6> rsfs5 <7> <8> <9>
rsfs7 <10> <10.95> <12> <14.4> <17.28> <20.74> <24.88> rsfs10 }{}
\DeclareMathAlphabet{\mathfs}{OML}{rsfs}{m}{n}
\newcommand{\bP}{{\bf P}}
\newcommand{\cI}{\ensuremath{\mathcal I}}
\newcommand{\cN}{\ensuremath{\mathcal N}}
\newcommand{\cT}{\ensuremath{\mathcal T}}
\newcommand{\cW}{\ensuremath{\mathcal W}}
\DeclareMathOperator{\tr}{\mathrm{Tr}}
\DeclareMathOperator{\cp}{\mathrm{cap}}
\newcommand{\bbR}{{\ensuremath{\mathbb R}} }
\newcommand{\bbT}{{\ensuremath{\mathbb T}} }
\newcommand{\bbZ}{{\ensuremath{\mathbb Z}} }
\newcommand{\gep}{\varepsilon}
\newcommand{\go}{\omega}
\newcommand{\dd}{\mathrm{d}}
\newcommand{\be}{\mathbf{e}}
\newcommand{\BP}{{\mathbb{P}}}
\newcommand{\BZ}{{\mathbb{Z}}}
\newcommand{\CE}{{\mathcal{E}}}
\newcommand{\CT}{{\mathcal{T}}}
\newcommand{\ind}{{\mathbbm{1}}}
\newcommand{\bae}{\begin{equation}\begin{aligned}}
\newcommand{\eae}{\end{aligned}\end{equation}}
\newcommand{\Z}{\mathbb{Z}}
\newtheorem{thm}{Theorem}[section]
\newtheorem{prop}[thm]{Proposition}
\newtheorem{lem}[thm]{Lemma}
\newtheorem{rem}[thm]{Remark}
\newtheorem{definition}{Definition}[section]
\newtheorem{rmk}{Remark}
\begin{document}
\numberwithin{equation}{section} \numberwithin{figure}{section}
\title[Connectivity for Random Interlacements]{On the easiest way to connect $k$ points in the Random Interlacements process}

\author[H. Lacoin]{Hubert Lacoin}
\address{H. Lacoin,
CEREMADE - UMR CNRS 7534 - Universit\'e Paris Dauphine,
Place du Mar\'echal de Lattre de Tassigny, 75775 CEDEX-16 Paris, France. \newline
e--mail: {\tt lacoin@ceremade.dauphine.fr}}
\author[J. Tykesson]{Johan Tykesson}
\address{J. Tykesson,
Department of Mathematics, Uppsala University, PO Box 480, SE-751 06
Uppsala, Sweden. \newline e--mail: {\tt johan.tykesson@math.uu.se}}

\begin{abstract}
We consider the random interlacements process with intensity $u$  on ${\mathbb Z}^d$, $d\ge 5$ (call it $\mathcal I^u$), built
from a Poisson point process on the space of doubly infinite nearest neighbor trajectories on ${\mathbb Z}^d$.
For $k\ge 3$ we want to determine the minimal number of trajectories from the point process that is needed to link together $k$ points in $\mathcal I^u$.
Let $$n(k,d):=
\lceil \frac d 2 (k-1) \rceil - (k-2).$$ We prove that almost surely given any $k$ points $x_1,...,x_k\in \mathcal I^u$, there is  a sequence of
$n(k,d)$ trajectories $\gamma^1,...,\gamma^{n(k,d)}$ from the underlying Poisson point process such that the union of their traces
$\bigcup_{i=1}^{n(k,d)}\tr(\gamma^{i})$ is a connected set containing $x_1,\dots,x_k$.
Moreover we show that this result is sharp, \textit{i.e.}\ that a.s.\ one can find $x_1,...,x_k\in \mathcal I^u$ that cannot be linked together
by $n(k,d)-1$ trajectories.
\\
2010 \textit{Mathematics Subject Classification: 60K35, 82D30, 82B41.}  \\
  \textit{Keywords: Random Interlacement, Connectivity, Percolation, Random Walk.}
\end{abstract}

\maketitle

%\tableofcontents
%*****************************************************************************************************************************************************
%***************************************************** Section I - Introduction **********************************************************************
%*****************************************************************************************************************************************************
\section{Introduction}

The random interlacement set is the trace left by a
Poisson point process on the space of doubly infinite nearest neighbor trajectories modulo time shift
on ${\mathbb Z}^d$. This Poisson point process
is governed by the intensity measure $u\nu$ where $u>0$ and $\nu$ is a measure on the space of
doubly infinite trajectories which was constructed by Sznitman in \cite{sznitmanvacant}, see~\eqref{e:nudef} below.
This measure essentially makes the trajectories in the Poisson point process look like double sided simple random walk paths.
The interlacement set is a site percolation model that exhibits polynomially decaying infinite-range dependence which sometimes complicates analysis.

One of the motivations for introducing the random interlacements model was to use it as a tool for the study of the behavior of simple random walks on large but finite graphs. For instance, random interlacements describe the local picture left by the trace of a simple random walk on a discrete torus or a discrete cylinder, see \cite{W08} and \cite{Szn09b} respectively.
Recent works that also have used random interlacements to obtain results about simple random walks on large graphs are for example \cite{Szn09d}, \cite{Szn09c}, \cite{TeixWind10}, \cite{Belius1} and \cite{Belius2}.

 It is known that the interlacement set is always a connected set, see Corollary (2.3) in \cite{sznitmanvacant}. Recently, in
\cite{rath2010connectivity} and \cite{PT2011} a stronger result was shown: given any two points $x$ and $y$ in the
interlacement set, one can find a path between $x$ and $y$ using the trace of at most $\lceil d/2 \rceil$
trajectories. The proofs in \cite{rath2010connectivity} and \cite{PT2011} are very different; in \cite{PT2011}
 the concept of stochastic dimension from \cite{benjamini2004geometry} is used, while in
\cite{rath2010connectivity} the approach of the problem is based on estimating capacities of random sets constructed using random walks.

\medskip

The result we present in this paper completes these works, giving a full picture of how a finite number of points are connected
together within the interlacement set. Fix $k\ge 2$, and $d\ge 5$,
given a realization $\mathcal I^u$ of the random interlacement of intensity $u$ constructed from the Poisson point process $\go_u$ on the space of
double trajectories (see the next section for formal definition), a.s.\ for any sequence of points
$x_1,...,x_k\in \mathcal I^u$, there is a sequence of $n(k,d)$ trajectories
$\gamma^1,\dots,\gamma^{n(k,d)}\in \go_u$ such that
\begin{itemize}
 \item [(a)] $\bigcup_{i=1}^{n(k,d)} \tr(\gamma^{n(k.d)})$ is a connected set (where $\tr$ denote the trace or image of a doubly infinite trajectory $\gamma: \bbZ\to \bbZ^d$),
 \item [(b)] $x_j\in \bigcup_{i=1}^{n(k,d)} \tr(\gamma^{n(k.d)}) \quad \forall j\in [1,k]$.
\end{itemize}

\medskip

 In addition, this result is sharp: of course the $n(k,d)$ trajectories are not always needed to link the $k$ points
(e.g.\ $x_1,\dots,x_k$ might all lie on the trace of a common trajectory) but
with probability one, there exist $y_1,...,y_k\in \mathcal I^u$ such that but there is no sequence of $n(k,d)-1$ trajectories such that
satisfies the two conditions $(a)$ and $(b)$ above.

\medskip

The main results from \cite{rath2010connectivity} and \cite{PT2011} corresponds to the case $k=2$.
The proof of the upper bound for $n(k,d)$ pushes the techniques developed in \cite{rath2010connectivity} further,
 while the proof of the lower bound uses a more novel approach based on diagrammatic sums.

\medskip

In the next section we give a rigorous definition of the random interlacement process and
state our result in full detail.
% The rest of the paper is organized as follows: In Section~\ref{s.notationresults} we give the precise definition of the random interlacements model.
%  Section~\ref{s.mainresults} contains our main results. In Section~\ref{s.lower} we prove the upper bound of our main result,
% and in Section~\ref{s.upper} we give the proof of the lower bound.

%In Section~\ref{s.notationresults} we give the definition of the random interlacements model, and state our results precisely.
% In Section~%%%\ref{s.lower} we prove the upper bound of our main result, and in Section~\ref{s.upper} we give the proof of the lower bound.

\section{Notation and results}\label{s.notationresults}

\subsection{Definition and construction of random interlacements}\label{s.randominterlace}
We consider the trajectory spaces $W$ and $W_+$ of doubly infinite and infinite transient nearest neighbor trajectories in $\Z^d$ (and $\cW, \cW_+$ the usual sigma algebras associated to them):
%$$W:=\{\gamma\,:\,\Z\to \Z^d;\,|\gamma(n)-\gamma(n+1)|=1,\,\forall n\in
%\Z;\,\lim_{n\to\pm\infty}|\gamma(n)|=\infty\},$$
$$W:=\{\gamma\,:\,\Z\to \Z^d;\,|\gamma(n)-\gamma(n+1)|=1,\,\forall n\in
\Z;\,|\{n;\,\gamma(n)=y\}|<\infty,\,\forall y\in{\mathbb Z}^d\},$$
$$W_+:=\{\gamma\,:{\mathbb N}\to \Z^d;\,|\gamma(n)-\gamma(n+1)|=1,\,\forall n\in
\Z;\,|\{n;\,\gamma(n)=y\}|<\infty,\,\forall y\in{\mathbb Z}^d\},$$
%The canonical
%coordinates on ${\mathcal W}$ and ${\mathcal W}_+$ will be denoted
%by $X_n$, $n\in {\mathbb Z}$ and $X_n$, $n\in {\mathbb N}$
%respectively.
where we use the convention that ${\mathbb N}$ includes $0$.
%We endow $W$ and $W_+$ with the sigma-algebras
%${\mathcal W}$ and ${\mathcal W}_+$, respectively which are
%generated by the canonical coordinates.
For $\gamma\in W$, we define the trace of $\gamma$, $\tr(\gamma)=\{\gamma(n), \ n \in \bbZ\}$. For trajectories $\gamma,\gamma'\in W$, we write $\gamma\sim \gamma'$ if for some $k\in \Z$ we have $\gamma(\cdot)=\gamma'(\cdot+k)$. The space of trajectories in $W$ modulo time shift will be denoted by $W^*$ and is defined as follows:
$$W^*:=W/\sim.$$
%$$W^*=W/\sim,\mbox{ where }\gamma\sim \gamma'\Longleftrightarrow
%\gamma(\cdot)=\gamma'(\cdot+k)\mbox{ for some }k\in \Z.$$
As the trace is invariant modulo time-shift we can naturally extend the notion of trace to $W^*$.

For $K\subset{\mathbb Z}^d$ and $\gamma\in W_+$, we let $H_K(\gamma)$,
$\tilde{H}_K(\gamma)$ and $T_K(\gamma)$ denote the entrance time, hitting time and exit time of $K$ by $\gamma$:
\begin{equation}\label{e.entrance}
H_K(\gamma):=\inf\{n\ge 0\,: \gamma(n) \in K\},
\end{equation}
\begin{equation}\label{hitting}
\tilde{H}_K(\gamma):=\inf\{n\ge 1\,:\, \gamma(n)\in K\},
\end{equation}
\begin{equation}\label{e.exit}
T_K(\gamma):=\inf\{n\ge 0\,:\, \gamma(n) \notin K\}.
\end{equation}

For $x\in {\mathbb Z}^d$, set $H_x:= H_{\{x\}}$. Let $P_x$ be the law on $W_+$ which corresponds to a simple
(i.e.\ nearest-neighbor symmetric) random walk on $\bbZ^d$ started at $x$. For
$K\subset \BZ^d$, let $P_x^K$ be the law of simple random walk started at $x$
conditioned on the event that the walk does not hit $K$:$$P_x^K[\cdot]:=P_x[\cdot|\tilde{H}_K=\infty].$$ For a finite $K\subset {\mathbb Z}^d$, we define the equilibrium measure
\bae e_K(x):=\left\{\begin{array}{ll}
P_x[\tilde{H}_K=\infty],\quad &x\in K\\
0,\quad &x\notin K.\end{array}\right. \eae
The capacity of
a finite set $K\subset \BZ^d$ is defined as
\begin{equation}\label{capacity}
\cp(K):=\sum_{x\in \BZ^d} e_K(x).
\end{equation}
 and the normalized equilibrium measure of
$K$ is given by
\begin{align}
\tilde{e}_K(\cdot):=e_K(\cdot)/\cp(K).
\end{align}
For $x,y\in {\mathbb Z}^d$ we let $|x-y|:=\|x-y\|_1$
denote the $l_1$ distance (which corresponds to the graph distance on $\bbZ^d$) between $x$ and $y$.
The following bounds of hitting-probabilities are well-known, see Theorem 4.3.1 in \cite{lawler2010random}. For any $x,y\in {\mathbb Z}^d$ with $x\neq y$,
\begin{equation}\label{e.hitbounds}
c |x-y|^{-(d-2)}\le P_x[\tilde{H}_y<\infty]\le c' |x-y|^{-(d-2)}.
\end{equation}
 We are now ready to introduce a Poisson point process on $W^*\times{\mathbb R}_+$. %The
%intensity measure of the Poisson point process is given by the
%product of a certain measure $\nu$ and the Lebesgue measure on
%${\mathbb R}_+$. The measure $\nu$ was constructed by Sznitman in
%\cite{sznitmanvacant}, and now we characterize it.
For $K\subset{\mathbb Z}^d$, let $$W_K:=\{\gamma\in W\,:\,\gamma({\mathbb Z})\cap K\neq\emptyset\}.$$
%$W_K$ denote the set of trajectories in $W$ with non-empty intersection with $K$.
Let $\pi^*$ be the projection from $W$ to $W^*$ and let $W_K^{*}:=\pi^*(W_K)$ be the set of trajectories in $W^*$ that enter $K$. We denote by $Q_K$ the finite measure on $W_K$ such that for $A,B\in {\mathcal
W}_+$ and $x\in {\mathbb Z}^d$,
\begin{equation}\label{e:Qdef}Q_K[(X_{-n})_{n\ge 0}\in A,X_0=x,(X_n)_{n\ge 0}\in B]=P_x^K[A]e_K(x)P_x[B].\end{equation} We let the measure $\nu$ be the unique $\sigma$-finite measure such that
\begin{equation}\label{e:nudef}
\ind_{W^*_K}\nu=\pi^*\circ Q_K,\text{ for all finite } K\subset{\mathbb Z}^d.
\end{equation}
Sznitman proved the existence and uniqueness of $\nu$ in Theorem 1.1 of \cite{sznitmanvacant}. We introduce the space of locally finite point measures
in $W^{*}\times {\mathbb R}_+$:
\bae
\Omega:=\bigg\{&\omega=\sum_{i=1}^{\infty}\delta_{(\gamma_i,u_i)};\,\gamma_i\in W^*, u_i>0,\,\\
&\omega(W_K^*\times [0,u])<\infty,\mbox{ for every finite }K\subset \Z^d\text{ and }u>0\bigg\},
\eae
as well as the space of locally finite point measures on $W^*$:
\begin{equation}\tilde{\Omega}:=\left\{\sigma=\sum_{i=1}^{\infty}
\delta_{\gamma_i};\,\gamma_i\in W^*,\, \sigma(W_K^*)<\infty,\,\mbox{ for
every finite }K\subset \Z^d\right\}.
\end{equation} For $0\le u'\le u$  the map $\omega_{u',u}$ from $\Omega$ into
$\tilde{\Omega}$ is defined as
\begin{align}\label{e.omegaudef}\omega_{u',u}:=\sum_{i=1}^\infty \delta_{\gamma_i}{\ind}\{u'< u_i\le u\},\text{ for }\omega=\sum_{i=1}^{\infty}
\delta_{(\gamma_i,u_i)}\in \Omega.\end{align} If $u'=0$, we use the short-hand notation
$\omega_u$. For convenience reasons we often improperly consider $\go_u$ as a set of trajectories instead of a point measure.

\medskip

 On $\Omega$ we consider
${\mathbb P}$, the law of a Poisson point process with intensity
measure $\nu(d\gamma)dx$ (see Equation (1.42) in \cite{sznitmanvacant} for a characterization of ${\mathbb P}$). It is easy to see that under ${\mathbb P}$,
the point process $\omega_{u,u'}$ is a Poisson point process on
$\tilde{\Omega}$ with intensity measure $(u-u') \nu(dw^*)$. Given
$\sigma\in \tilde{\Omega}$, the set of points in ${\mathbb Z}^d$ that is visited by at least one trajectory in $\sigma$ is denoted by
\begin{equation}\label{e.nicenotation}
{\mathcal
I}(\sigma):=\bigcup_{\gamma\in\sigma}\tr(\gamma).
\end{equation}
For $0\le u'\le u$, we define the \emph{random interlacement set} between intensities
$u'$ and $u$ as
\begin{equation}\label{e.ridef}
{\mathcal I}^{u',u}:={\mathcal I}(\omega_{u',u}).
\end{equation}
 In case $u'=0$, we use the short-hand notation ${\mathcal I}^u$.
For a point process $\sigma$ on $\Omega$ or $\tilde{\Omega}$ we let $\sigma|_{A}$ denote the restriction of $\sigma$ to $A\subset W^*$.

We conclude this section by stating the convention for the use of constants throughout the paper: The letters $c,c',C,C'$ etc.
denote finite positive constants which are allowed to depend only on the dimension $d$ and the intensity $u$. Their values might change
from line to line. Numbered constants $c_i$ are finite positive, and supposed to be the same inside a certain neighborhood (for example a proof).
They are defined where they first appear. Dependence of additional quantities will be indicated, for example $c_{\delta}$ denotes a constant
that might depend on $d$, $u$ and $\delta$. Moreover when needed we will identify trajectory $\gamma \in W^*$, with a canonical element of its
equivalence class $(\gamma_n)_{n\ge 0}$.

\subsection{Main result}\label{s.mainresults}

We say that the sequence of trajectories $(\gamma^i)_{i=1}^n$ connects the sequence of points $(x_i)_{i=1}^k$ if
the union of their traces (or images) includes a connected subset that contains $x_1 , . . . , x_k$. We
say that $(\gamma^i)_{i=1}^n$ connects strictly $(x_i)_{i=1}^k$ if it connects it and there is no strict subsequence of $(\gamma^i)_{i=1}^n$ that does. Note
that if a sequence of trajectories connects points, one can extract from it a subsequence
that connects them strictly.

\begin{thm}\label{theresult}
For every $k\ge 2$, for every $u>0$, and for ${\mathbb P}-$almost every realization of the Poisson process
$\go_u$, the two following properties are satisfied:

\begin{itemize}
\item[(i)] Given a sequence of $k$ points $(x_i)_{i=1}^k$ in $({\mathcal I}^u)^k$, it is possible to find a sequence
$(\gamma^i)_{i=1}^{n(k,d)}$ in $(\go_u)^{n(k,d)}$ that connects it.

\item[(ii)] It is possible to find $(x_i)_{i=1}^k$ in $({\mathcal I}^u)^k$ such that there exists no sequence
$(\gamma^i)_{i=1}^{n(k,d)-1}\in(\go_u)^{n(k,d)}$ that connects it.
\end{itemize}
\end{thm}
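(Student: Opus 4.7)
The plan is to split the proof of Theorem~\ref{theresult} into its upper and lower bounds, which require complementary techniques.

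For part~(i), by a countable union over $(x_1,\ldots,x_k)\in(\Z^d)^k$ it suffices to show that for every fixed such tuple, conditional on $\{x_1,\ldots,x_k\}\subset \CI^u$, with probability one the $x_i$ can be linked by $n:=n(k,d)$ trajectories. I would fix a ``tree type'': $n$ trajectory-vertices with the $k$ markers assigned to them and connected via intersection constraints, chosen so that the total ``stochastic-dimension'' count of the resulting configuration saturates at exactly $kd$. For $d$ even, a chain of $n$ trajectories with markers placed at positions whose consecutive gaps equal $d/2$ works; for $d$ odd one mixes sub-chains of lengths $\lceil d/2\rceil$ and $\lfloor d/2\rfloor$, relying on ``dimension borrowing'' from a longer adjacent sub-chain to compensate for a too-short one. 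Adapting the capacity method of \cite{rath2010connectivity}, one then constructs the trajectories inductively: after $j$ of them have been selected into a partial set $S_j$, one shows that the capacity of $S_j$ seen from the next marker stays bounded below by a positive constant, so that the $(j{+}1)$-th trajectory can be drawn from $\go_u$ with positive probability. Iterating $n$ times gives positive probability of full connection; a standard $0$--$1$ argument, based on the product structure of $\go_u$, upgrades this to probability one.

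For part~(ii), I would pass to a first-moment (diagrammatic) estimate in a growing box $B_L:=[-L,L]^d\cap\Z^d$. Fix $m=n(k,d)-1$ and enumerate the combinatorial types: a connected graph $G$ on $m$ trajectory-vertices, a marker-assignment $\tau:\{1,\ldots,k\}\to V(G)$, and a choice of intersection-witness $z_e\in\Z^d$ for each edge $e\in E(G)$. For each such type, the expected number of $k$-tuples $(x_1,\ldots,x_k)\in B_L^k$ realising a connection of that type is bounded using the inequality
$$
\int d\nu(\gamma)\prod_{j=1}^p\ind_{y_j\in\tr(\gamma)}\;\le\;C\sum_{\text{spanning tree }S\text{ of }\{y_1,\ldots,y_p\}}\prod_{\{y,y'\}\in E(S)}g(y-y'),
$$
which reduces the count to a lattice Feynman diagram with Green's-function propagators $g(\cdot)\sim|\cdot|^{-(d-2)}$. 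Power-counting gives a total scaling $L^{d+2(k+m-2)}$ for each tree-type (after integrating over internal vertices and summing over external ones); for $m<n(k,d)$ this exponent is strictly less than $kd$ --- this is exactly the content of the identity $n(k,d)=\lceil d(k-1)/2\rceil-(k-2)$ --- and adding cycles to $G$ only decreases the exponent further since $d\ge 5$. Summing over the finitely many combinatorial types yields $\E[\#\{\text{connectable tuples in }B_L^k\}]=o(L^{kd})$, while $\E[|(\CI^u\cap B_L)^k|]=\Theta(L^{kd})$. Markov combined with the ergodicity of $\go_u$ under lattice shifts then forces, almost surely, the density of connectable $k$-tuples inside $(\CI^u)^k$ to be zero, and in particular the existence of a non-connectable tuple.

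The main obstacle I expect is the power-counting in~(ii): the combinatorial types (graph shape, marker-assignment, spanning-tree decomposition of the visited points on each trajectory) proliferate, and one must verify that the extremum of the effective scaling exponent across all types is exactly the one dictated by $n(k,d)$; in particular, marker-collisions on a single trajectory and cyclic structures must be controlled as subdominant. A secondary difficulty, for~(i), is that the $d$-odd case falls outside the simple chain construction and forces the capacity induction through a mixed tree structure (already nontrivial for $k=3$, $d$ odd, where $n(3,d)=d-1$), so the estimate of \cite{rath2010connectivity} has to be refined to accommodate sub-chains of only $\lfloor d/2\rfloor$ trajectories.
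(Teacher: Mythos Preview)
Your overall architecture matches the paper: capacity estimates in the style of \cite{rath2010connectivity} for part~(i), and a diagrammatic first-moment bound for part~(ii). The main differences and the genuine gap are as follows.

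\textbf{Part (i).} For $d$ even your chain scheme is fine and is essentially the iterated $k=2$ case. For $d$ odd, however, the phrase ``dimension borrowing from a longer adjacent sub-chain'' hides exactly the new work the paper has to do: one must show that \emph{three} simple-random-walk traces can be joined by $d-4$ interlacement trajectories (Proposition~\ref{p.threecase}). This is not a corollary of the two-point capacity estimate; it requires the construction of the sets $A_i^{(j)}(r,R)$ and a three-set hitting estimate (Lemma~\ref{l.mainingredient}). Once this $k=3$ statement is available, the general $k$ follows by overlapping triples, which is the precise form your ``mixed sub-chains'' should take. Also, the upgrade from positive probability to probability one is not a generic $0$--$1$ law: the event depends on finitely many trajectories and is neither a tail event nor translation-invariant for fixed $x_i$. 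The paper obtains probability one by a multi-scale Borel--Cantelli argument (Lemma~\ref{pasouf}), making independent attempts in disjoint annuli.

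\textbf{Part (ii).} Your counting route (sum over $x\in B_L^k$, then ergodicity) is a legitimate alternative to the paper's fixed-point estimate, but the assertion that every tree-type scales as $L^{d+2(k+m-2)}$ is \emph{false} as stated: some of the lattice diagrams you produce are divergent, so the bound is $+\infty$, not $o(L^{kd})$. A concrete example for $d=5$, $k=4$, $m=5$: take the trajectory-graph $G$ to be the path $v_1\!-\!v_2\!-\!v_3\!-\!v_4\!-\!v_5$ with markers $x_1,x_2,x_3,x_4$ on $v_1,v_2,v_4,v_5$ respectively and no marker on $v_3$. Choosing on $v_2$ the Hamiltonian path $z_{12}\!-\!x_2\!-\!z_{23}$ and on $v_4$ the path $z_{34}\!-\!x_3\!-\!z_{45}$ yields a Green's-function tree containing the segment $x_2\!-\!z_{23}\!-\!z_{34}\!-\!x_3$ with $z_{23},z_{34}$ internal; but $(g*g*g)(x_2-x_3)=\int |y|^{-1}|x-y|^{-3}\,dy$ diverges at infinity in $\bbZ^5$, so the summed bound is infinite even after restricting all $x_i$ to $B_L$. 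This is not a marker-collision pathology (each trajectory carries at most one marker here). The paper deals with exactly this phenomenon by classifying schemes: those with a proper subtree using fewer than $n(k_1,d)$ trajectory types are discarded via the induction hypothesis on $k$, and Proposition~\ref{feynman} is applied only to the remaining ``good'' schemes, for which condition~(ii) there guarantees convergence. Your proposal lacks any such mechanism, and without it the first-moment bound does not close.
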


\begin{rem}\rm
The result is restricted to $d\ge 5$ but this is not in fact a true restriction. Indeed if $d=3$ or $4$ the trace of each trajectory in $\go_u$
intersect the trace of all the others, so that Theorem \ref{theresult} trivially holds with $n(k,3)=n(k,4)=k$.
 \end{rem}

The proofs of $(i)$ and $(ii)$ are quite independent and are found in Section~\ref{s.upper} and
Section~\ref{s.lower} respectively.
%The two parts of the result are quite independent and are proved in Section~\ref{s.upper} and
%Section~\ref{s.lower} respectively.
In what follows we  say that a sequence of points $(x_i)_{i=1}^k$ is $n$-connected (in $(\mathcal I^u)$) if $(i)$ occurs with  $n(k,d)$ replaced by $n$.

\section{Proof of $(i)$ of Theorem \ref{theresult}}\label{s.upper}

As will be seen later in this section, in order to prove that $n(k,d)$ trajectories
are sufficient to connect $k$ points, it is essentially sufficient to prove this in the case $k=2$ and $k=3$.
The case $k=2$ having been proved in \cite{rath2010connectivity} and \cite{PT2011}, we can focus on the case $k=3$.

\medskip

The first step is to reformulate the result.

\begin{prop}\label{p.threecase}
Let $d\ge 5$ and suppose $x_1$, $x_2$, $x_3$ in $\bbZ^d$.
Let $X^1$, $X^2$, $X^3$ be three independent simple random walks on $\bbZ^d$ with starting points $x_1$, $x_2$, $x_3$ respectively.
Consider also a random-interlacement process $\go_u$ independent of the $X^i$s.

\medskip
For any choice of the $x_i$, for every $u>0$,
almost surely one can find $d-4$ trajectories $(\gamma^i)_{i=1}^{d-4}$ in $(\go_u)^{d-4}$ such that
the union of the traces of the $\gamma^i$s forms a connected subset that intersects the traces of $X^1$, $X^2$ and $X^3$.
\end{prop}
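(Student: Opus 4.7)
My plan is to prove Proposition~\ref{p.threecase} by leveraging the $k=2$ connectivity results of \cite{rath2010connectivity,PT2011} together with a refined capacity-based construction in the spirit of \cite{rath2010connectivity}, extending the latter to the setting where the ``target'' of a connection is a random cluster rather than a single point. First, I would reduce the a.s.\ conclusion to a local positive-probability estimate: there exist constants $c,C>0$ such that for every sufficiently large $R$, with probability at least $c$ one can find $d-4$ interlacement trajectories whose traces all lie in the annulus $B_{CR}\setminus B_R$ and whose union forms a connected set meeting each of the pieces $\tr(X^i)\cap(B_{CR}\setminus B_R)$, $i=1,2,3$. Such a local statement upgrades to an a.s.\ statement by running the construction on a sequence of well-separated dyadic scales $R_n=2^n$ using independent sub-processes of $\go_u$ (for instance by splitting the intensity-level interval $[0,u]$ into countably many pieces); by transience each $X^i$ eventually crosses every annulus, so a Borel--Cantelli argument yields success at infinitely many scales.

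To establish the local estimate I would use a two-step construction based on the decomposition $d-4=(\lceil d/2\rceil-2)+(\lfloor d/2\rfloor-2)$. In the first step, apply a quantitative version of the $k=2$ result to the pair $(X^1,X^2)$: with probability bounded below one produces a connected cluster $\mathcal{C}_{12}$ consisting of $\tr(X^1)\cup\tr(X^2)$ together with $\lceil d/2\rceil-2$ trajectories from $\go_u$ supported in the annulus. In the second step, one appends $\lfloor d/2\rfloor-2$ further trajectories that connect $X^3$ to $\mathcal{C}_{12}$. For even $d$ this second step is essentially another application of the $k=2$ theorem connecting $X^3$ to a designated point of $\mathcal{C}_{12}$. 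For odd $d$ the two counts differ by one, and saving the extra trajectory requires a sharper capacity estimate that exploits the enhanced effective capacity of $\mathcal{C}_{12}$ over a single random walk trace; this is obtained by carrying an explicit lower bound on the capacity of intermediate clusters through the inductive construction of \cite{rath2010connectivity}.

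The main obstacle I anticipate is this capacity-enhancement step for odd $d$. One needs a quantitative refinement of the $k=2$ construction that not only produces connectivity but also preserves a controlled lower bound on the capacity of every intermediate cluster built along the way, and simultaneously one must maintain enough independence between the first-step construction, the additional interlacement trajectories, and the walk $X^3$. Once this quantitative capacity bound is in place, a second-moment argument on the number of interlacement trajectories hitting both $\tr(X^3)$ and $\mathcal{C}_{12}$ within the target annulus should close the proof, with the expected count being of order one at the critical scale $R$.
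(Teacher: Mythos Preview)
Your sequential decomposition $d-4=(\lceil d/2\rceil-2)+(\lfloor d/2\rfloor-2)$ runs into a genuine obstruction at $d=5$, which is precisely the case where the three-point statement carries new content beyond Proposition~\ref{p.twocase}. For $d=5$ one has $\lceil d/2\rceil-2=1$ and $\lfloor d/2\rfloor-2=0$, so your second step asks $X^3$ to intersect the cluster $\mathcal{C}_{12}=\tr(X^1)\cup\tr(X^2)\cup\tr(\gamma)$ \emph{directly}, with no additional trajectory. But $\mathcal{C}_{12}$ is a union of three walk-like traces, each of capacity at most $cR^2$ inside $B(CR)$, and by subadditivity $\cp(\mathcal{C}_{12})\le 3cR^2$: the hoped-for capacity enhancement simply does not occur. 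Conditionally on $\mathcal{C}_{12}$ (which is independent of $X^3$), a first-moment bound gives
\[
\bP\big[\tr(X^3)\cap\mathcal{C}_{12}\cap B(CR)\ne\emptyset\ \big|\ \mathcal{C}_{12}\big]\ \le\ \sum_{z\in\mathcal{C}_{12}}g(x_3,z)\ \le\ c'\,R^{2}\cdot R^{2-d}\ =\ c'R^{-1}\to 0,
\]
so the local positive-probability estimate fails. The point is that once you \emph{fix} a single $\gamma$ connecting $X^1$ and $X^2$ without looking at $X^3$, you have discarded the freedom to search over the $\sim R$ candidate connecting trajectories, and that freedom is exactly what is needed to find one that also meets $X^3$. (For odd $d\ge 7$ your scheme can be made to work, because the intermediate set $A_1^{(k_d)}$ built in the first step already has capacity $\sim R^{d-3}\gg R^2$; but $d=5$ is the crux, and there the construction produces no such intermediate set.)

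The paper's argument for odd $d$ is structurally different and avoids this trap: with $k_d=(d-3)/2$ it builds chains of $k_d-1$ trajectories from \emph{each} of $X^1$ and $X^2$ separately, producing sets $A_1^{(k_d)},A_2^{(k_d)}$ of capacity $\sim R^{d-3}$, leaves $X^3$ untouched, and then shows that a \emph{single} interlacement trajectory hits all three of $A_1^{(k_d)},A_2^{(k_d)},A_3^{(1)}$ simultaneously with probability bounded below. The total count is $2(k_d-1)+1=d-4$, and the key computation is that the expected number of trajectories hitting all three sets is of order $\cp(A_3^{(1)})\cdot R^{4-2d}\cp(A_1^{(k_d)})\cp(A_2^{(k_d)})\sim R^{2+(4-2d)+(d-3)+(d-3)}=R^0$. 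For $d=5$ this reduces to a single trajectory simultaneously hitting the three walk traces---a genuinely three-way event that a two-step scheme cannot reproduce.

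A secondary issue: splitting $[0,u]$ into countably many disjoint pieces forces the intensities of the sub-processes to tend to $0$, so the constants in your local estimate degrade along the sequence of scales and a naive Borel--Cantelli does not close. The paper instead uses a fixed finite splitting of the intensity into $4k_d$ equal parts and manufactures the needed independence across scales by taking a super-exponentially growing sequence $(r_k,R_k)$ and applying a conditional Borel--Cantelli lemma.
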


We also need a similar result for the case of two trajectories, which is proved in \cite{rath2010connectivity} (with a different formulation).
%or alternatively can be proved using the same line of proof (simplified) that for Proposition \ref{p.threecase}.

\begin{prop}\label{p.twocase}
Let $d\ge 5$ and suppose $x_1$, $x_2$ in $\bbZ^d$.
Let $X^1$, $X^2$ be two independent simple random walks on $\bbZ^d$ with starting points $x_1$, $x_2$ respectively.
Consider also a random-interlacement process $\go_u$ which is independent of the walks $X^1$ and $X^2$.

\medskip
For every choice of $x_1$, $x_2$, for every $u>0$,
almost surely one can find $\lceil d/2 \rceil -2$ trajectories $(\gamma^i)_{i=1}^{\lceil d/2 \rceil -2}$ in $(\go_u)^{\lceil d/2 \rceil -2}$ such that
the union of the traces of the $\gamma^i$s forms a connected subset that intersects the traces of $X^1$ and $X^2$.
\end{prop}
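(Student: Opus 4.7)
The plan is to adapt the capacity-based argument of R\'ath--Sapozhnikov in \cite{rath2010connectivity}, which handles the fixed-endpoint analogue (connecting two prescribed points $y_1,y_2\in\CI^u$ by $\lceil d/2\rceil$ trajectories), to the present setting where the two endpoints range over the independent simple random walk traces $\tr(X^1)$ and $\tr(X^2)$. The heuristic behind the bound $\lceil d/2\rceil-2$ is the stochastic-dimension philosophy of Benjamini--Kesten--Peres--Schramm: a doubly-infinite random walk trace behaves like a set of stochastic dimension $2$ in $\bbZ^d$, the union of $n$ independent such traces like dimension $2n$, and two sets of combined dimension $\ge d$ intersect with uniformly positive probability at every pair of distant base points. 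Since $\tr(X^1)\cup\tr(X^2)$ already contributes dimension $4$, only $\lceil d/2\rceil - 2$ interlacement trajectories are needed to push the total dimension above $d$.

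I would organize the proof as a two-sided ``growth'' procedure. Fix a large dyadic scale $R$, use Poisson thinning to split $\omega_u$ into $m := \lceil d/2\rceil-2$ independent sub-processes $\omega_u^{(1)},\ldots,\omega_u^{(m)}$, and inductively construct
\[
A_0 := \tr(X^1),\qquad A_i := A_{i-1}\cup\tr(\gamma^{i}),\quad i\ge 1,
\]
where $\gamma^{i}\in \omega_u^{(i)}$ is selected to intersect $A_{i-1}$ inside a prescribed dyadic annulus, together with a symmetric sequence $B_0 := \tr(X^2), B_1, \ldots, B_{m-i}$ grown from $x_2$. The aim is to pick an intermediate index $i$ so that $A_i$ and $B_{m-i}$ have combined effective dimension $\ge d$, after which a single trajectory in $\omega_u^{(m)}$ almost surely links them.

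The technical heart is a one-step capacity lemma in the spirit of Lemma~4.1 of \cite{rath2010connectivity}: if $S,T$ are disjoint sets with $\cp(S\cap B(0,R))\gtrsim R^{\alpha}$, $\cp(T\cap B(0,R))\gtrsim R^{\alpha'}$, and $\alpha+\alpha'>d-2$, then the Poisson number of trajectories in $\omega_u^{(i)}$ that meet both $S$ and $T$ has mean of order $u\,R^{\alpha+\alpha'-(d-2)}$, with a matching variance bound allowing a Paley--Zygmund lower bound on the conditional connection probability. Iterating, the effective dimension of $A_i$ climbs from $2+2i$ to $2+2(i+1)$ at each stage. The initialization uses that $\tr(X^1)$ carries capacity of order $R^2$ in every dyadic annulus around $x_1$, placing it exactly at the ``stage zero with dimension $2$'' level needed. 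A $0$--$1$ law (ergodicity of $\omega_u$ under $\bbZ^d$-translations, combined with the freedom to repeat the construction on a nested sequence of disjoint dyadic shells) then upgrades positive-probability success to almost sure existence.

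The main obstacle, and the reason one cannot simply quote \cite{rath2010connectivity} verbatim, is producing robust capacity lower bounds for the iteratively constructed sets $A_i$ at every dyadic scale in the procedure. This requires truncation to discard trajectories that touch the relevant annulus only briefly, together with a somewhat delicate second-moment computation to show that the portion of $A_i$ which actually lives in the next annulus still carries the claimed capacity. A secondary issue absent in the fixed-endpoint version is that $\tr(X^1)$ and $\tr(X^2)$ are themselves random; however, since a simple random walk trace has, uniformly in its starting point and with exponentially small failure probability, the capacity profile of a stochastic-dimension-$2$ set on every dyadic annulus around its origin, this feature is handled by a routine union bound and does not demand any genuinely new input beyond the estimates already developed in \cite{rath2010connectivity}.
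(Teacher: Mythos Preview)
The paper does not prove this proposition: it is quoted directly from \cite{rath2010connectivity} (``with a different formulation''), and your sketch is a faithful outline of the capacity-growth / Borel--Cantelli argument used there, which the paper itself reproduces in detail for the companion three-walk case (Proposition~\ref{p.threecase}, via Lemmas~\ref{pasouf}--\ref{l.capprob}). So your approach and the paper's coincide by design.

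Two small points on the sketch. First, the trajectory bookkeeping is off by one: if $A_i$ costs $i$ interlacement trajectories and $B_{m-i}$ costs $m-i$, a further linking trajectory makes $m+1$; the fix is to grow only to $B_{m-1-i}$, or (as in the paper's three-walk proof) to use an asymmetric split, growing one side to stage $k_d$ and leaving the other at stage~$1$ before spending the last trajectory on the link. Second, the upgrade from positive probability to almost sure is not via ergodicity or a $0$--$1$ law---the fixed starting points $x_1,x_2$ destroy translation invariance---but purely via the conditional Borel--Cantelli over the nested disjoint shells that you also mention; this is exactly the passage from Lemma~\ref{l.mainingredient} to Lemma~\ref{pasouf} in the paper.
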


\begin{rmk} \rm
Notice that when $d$ is even Proposition \ref{p.threecase} can easily be deduced from Proposition \ref{p.twocase}.
Hence in what follows, we will only care about the case $d$ odd.

\end{rmk}

\begin{proof}[Proof of Theorem \ref{theresult} (i) from Proposition \ref{p.threecase} and \ref{p.twocase}]

First consider the case where $k=2p+1$ is odd. Let $x_1, \dots, x_k$ in $\bbZ^d$.
We say that the sequence of points $(x_1,\dots,x_k)$ is \textsl{well behaved} for $\go_u$, and we will write $WB$, if each point of the sequence belongs
to the interlacement set
and if there exists a sequence $0<t_1<\dots<t_k\le u$ such that for all $i$ there exists $(\gamma^i,t_i)\in \go$  with $x_i\in \gamma^i$.

An equivalent formulation of $(i)$ from Theorem \ref{theresult} is

\begin{equation}\label{trewia}
\begin{split}
\mbox{For all }& k\mbox{ and for all } (x_i)_{i=1}^k\in (\bbZ^d)^k\mbox{ we have that}\\
&\bP\left[ \exists (\gamma^i)_{i=1}^{n(k,d)}, (\gamma^i)_{i=1}^{n(k,d)} \text{ connects } (x_i)_{i=1}^k
\ | \ (x_i)_{i=1}^k\ WB \right]=1,
\end{split}
\end{equation}
or alternatively
\begin{equation}
\mbox{For all }k,\,\bP\left[ \forall (x_i)_{i=1}^k, (x_i)_{i=1}^k \ WB \Rightarrow \exists (\gamma^i)_{i=1}^{n(k,d)}, (\gamma^i)_{i=1}^{n(k,d)} \text{ connects } (x_i)_{i=1}^k
 \right]=1.
\end{equation}
Indeed clearly, if $(i)$ of Theorem \ref{theresult} holds, so does~\eqref{trewia}.
We prove the other implication by contradiction: if  $(i)$ from Theorem \ref{theresult} is violated, with positive probability
one can find $k$ points in $\mathcal I^u$ that cannot be connected by $n(k,d)$ trajectories in $\omega_u$.
As these points are in $\mathcal I^u$ one can by definition find a sequence $(\gamma^i)^k_{i=1}$ of trajectoris in $\go_u$
such that $x_i\in \gamma^i$ for all $i$. If all the $\go_i$ are distinct, a.s.\ after an eventual
reordering of the sequence we get that
$(x_1,\dots, x_k)$ is well behaved so that \eqref{trewia} cannot hold.
If on the other hand if there are repetition in $(\gamma^i)^k_{i=1}$, one can find one extracts a well behaved subsequence
 $(x'_i)^{k'}_{i=1}$ of $(x_i)^k_{i=1}$
by deleting the points $x_i$ for which
\begin{equation}\label{kwalee}
 \exists j<i, \gamma_i= \gamma_j,
\end{equation}
and reordering the remaining subsequence. Then if \eqref{trewia} holds then one can a.s.\ connect  $(x'_i)^{k'}_{i=1}$ with $n(k',d)$ trajectories.
Then using the definition \eqref{kwalee} one can link all the $(x_i)^k_{i=1}$ together with $n(k',d)+(k-k')\le n(k,d)$ trajectories
(just by using the $\gamma_j$ corresponding to the $k'-k$ remaining points if necessary in addition to the trajectories that connect  $(x'_i)^{k'}_{i=1}$)
which yields a contradiction.
Hence we can focus on proving \eqref{trewia}.

\medskip

Let $\tau_0:=0$ and for $i=1,...,k$ let recursively
\begin{equation}
 \tau_i:=\min\{ s>\tau_{i-1} \ |  \ x_i \in \cI^s \}.
\end{equation}
Note that by definition of $\go_u$, in $\go_{\tau_{i-1},\tau_i}$,
with probability one, there exists a unique trajectory $\gamma^i$ which has $x_i$ in its trace.

Furthermore, by the strong Markov property for Poisson, the law of $\gamma^i$ is independent of that of $\tau_i$ (and the $(\gamma^i)_{i=1}^k$ are independent)
and if we parametrize the oriented trajectory $\gamma^i$ as $(\gamma^i_n)_{n\in \bbZ}$
such that $0$ is the first time $w^i$ visits $x_i$ is $0$, then from the definition of the random interlacement (recall \eqref{e:Qdef})
\begin{equation}
 (\gamma^i_n)_{n\ge 0} \text{ is a simple random walk on } \bbZ^d \text{ started at }x_i.
\end{equation}
Set $\cT:=\max_{i\in[1,k]} \tau_i$.
The event $\{ (x_i)_{i=1}^k \text{ is well behaved}\}$ is equal to $\{ \cT \le u\}$, and up to an event of zero-probability, it coincides with $\{cT< u\}$.

\medskip

Note that conditioned on $\cT$, the process $\go_{\cT,u}$ is independent of $\cT$ and of the $\gamma^i$s.
Hence, setting $X^i:=(\gamma^i_n)_{n\ge 0}$,  we can apply Proposition \ref{p.threecase} and for every $j=1,...,p$ find a sequence of $(d-4)$ trajectories
$(\gamma^i)_{i=k+(j-1)(d-4)+1}^{k+j(d-4)}$ in $\go_{\cT,u}$ that connects together the traces of $X^{2j-1}$, $X^{2j}$, $X^{2j+1}$.

\medskip

One can then conclude by observing that $k+p(d-4)=n(k,d)$ for $k$ odd and that $(\gamma^{i})_{i=1}^{k+p(d-4)}$ is a set of trajectories in $\go_u$ that connects $(x_i)_{i=1}^k$.

\medskip

The case $k=2p$ even is dealt similarly, the only difference being in the last step: we use Proposition \ref{p.threecase} for $i=1,...,p-1$ to connect together
$X^1,\dots,X^{2p-1}$ and Proposition
\ref{p.twocase} to connect $X_{2p-1}$ and $X_{2p}$
with the trajectories $(\gamma^{i})_{i=k+(p-1)(d-4)+1}^{k+(p-1)(d-4)+\lceil d/2 \rceil -2}$ from $\go_{\cT,u}$, and conclude in a similar manner.

\end{proof}

Before the proof of Proposition~\ref{p.threecase} for $d$ odd, (in what follows we always consider that $d$ is odd)
we must introduce additional notation in order to reformulate the statement. Introduce the number
\begin{equation}\label{e.kddef}
k_d:=\lceil d/2 \rceil -2=\frac{d-3}{2}.
\end{equation}

% Let $(\sigma^{(i,j)})_{i\ge 1,\,1\le j \le 4}$ be a collection of i.i.d. copies of $\omega_u$ where $\omega$ is chosen according to
% ${\mathbb P}$. Let $(X_v)_{v\in {\mathbb Z}^d}$ be a collection of
% independent simple random walks with $X_v(0)=v$.

% Let ${\mathbf P}$ be the probability measure governing all these processes,
% as well as the random objects introduced later in this section.

For a finite set $A\subset {\mathbb Z}^d$ and $\sigma\in \tilde{\Omega}$, let $N_A(\sigma)$ be the number of trajectories
in $\sigma$ that intersect $A$. Let $\gamma^1,...,\gamma^{N_A(\sigma)}$ be the trajectories from $\sigma$
that intersect $A$ parameterized so that $\gamma^i_0\in A$ and $\gamma^i_n\notin A$ for all $n<0$
and all $i\in \{1,...,N_A(\sigma)\}$. For $\sigma \in \tilde{\Omega}$, $A\subset {\mathbb Z}^d$ and
$R\in {\mathbb Z}_+$ we define the random set of vertices $\Psi(\sigma,A,R)$ as
\begin{equation}\label{e.psidef}
\Psi(\sigma,A,R):=\bigcup_{i=1}^{N_A(\sigma)}\left(\{\gamma_i(t)\,:\,1\le t\le R^2/8\}\cap B(\gamma_i(0),R/2)\right)
\end{equation}

\begin{definition}\label{d.measdecomp}
Let $r,R \in \bbR_+\cup\{\infty\}$ with $r<R$. For $\sigma\in \tilde{\Omega}$, let $\sigma_R$ be the restriction of $\sigma$
to the trajectories that intersect $B(R)$. Let $\sigma_{r,R}$ be the restriction of $\sigma_R$ to the set of trajectories that do not intersect $B(r)$.
\end{definition}

Observe that $\sigma_r$ and $\sigma_{r,R}$ are supported on disjoint sets of trajectories and that
\begin{equation}\label{d.measdecomp2}
\sigma=\sigma_{r}+\sigma_{r,\infty}.
\end{equation}

Let $(\sigma^{(i,j)})_{1\le i \le 4,\,1\le j \le k_d}$ in $\tilde{\Omega}$ be a family of i.i.d. random interlacement processes with parameter
$\bar u:=u/4k_d$ defined by
\begin{equation}
 \sigma^{(i,j)}:=\go_{\bar u ((i-1)k_d+(j-1)),\bar u ((i-1)k_d+j)},
\end{equation}
and let $(X^i)_{i=1}^3$ be three independent simple random walks starting from $x_1, x_2$ and $x_3$ respectively.
Given $R$, let $T^i(B(R))$ be the first exit time of $X^i$ from $B(R)$ and
$Y_i:=(Y^{i,R}_n)_{n\ge 0}= (X^{i}_{n+T^i(B(R))})_{n\ge 0}$ (and $Y^i=X^i$ when $R=\infty$).
We call $\bP$ the probability measure governing all these processes.
\medskip

We define sequences of random subsets of
${\mathbb Z}^d$. For $0\le r<R \le \infty$, and $i=1,2,3$ set
\begin{equation}\label{e.Adef1}
A^{(1)}_i(r,R)=A^{(1)}_i(R):=\left\{Y^{i,R}_n \,:\,1\le n \le R^2/8\right\}\cap B(Y^i_0,R/2), \ 1\le i \le 3.
\end{equation}
Then recursively for $2\le s \le k_d$ and with $r$, $R$, $j$ as above, define
\begin{equation}\label{e.Adef11}
A^{(j)}_i (r,R):=\Psi\left(\sigma^{(i,j)}_{r,\infty},A^{(j-1)}_i(r,R),R\right)=\Psi\left(\sigma^{(i,j)}_{r,sR},A^{(j-1)}_i(r,R),R\right).
\end{equation}
We simply write $A^{(j)}_i$ when $r=0$ and $R=\infty$.
Note that by construction if $y\in A^{(j)}_i(r,R)$ then there exists a sequence of $k_d-1$ trajectories in $\mathcal I^u$ linking it to the
trace of $X^i$. Thus to prove proposition \ref{p.threecase}, it is in fact sufficient to prove (recall that $2 (k_d-1)+1=d-4$),

\begin{lem}\label{pasouf}
 With probability one, one can find $\gamma\in \sigma^{(4,1)}$ that
connects $A^{(k_d)}_1$, $A^{(k_d)}_2$ and $A^{(1)}_3$ together.
\end{lem}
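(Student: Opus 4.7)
The plan is to prove the lemma by a two-step strategy: first exhibit, at every sufficiently large dyadic scale $R$, a positive-probability lower bound for the event that some trajectory of $\sigma^{(4,1)}$ joins all three sets at that scale; then upgrade this to probability one by a multi-scale Borel--Cantelli argument. Concretely, I would work with the scale-restricted objects $A_1^{(k_d)}(R/2,R)$, $A_2^{(k_d)}(R/2,R)$, $A_3^{(1)}(R/2,R)$, which are built from $\sigma^{(i,j)}$-trajectories entering $B(R)$ but avoiding $B(R/2)$, together with the analogously restricted $\sigma^{(4,1)}_{R/2,R}$. Since $A^{(j)}_i(R/2,R)\subset A^{(j)}_i$ and the $R/2$-cutoff in the definition of $\Psi$ confines the scale-restricted sets to a bounded region around the annulus $B(R)\setminus B(R/2)$, it suffices to find, for infinitely many scales $R_n=2^n$, a trajectory in $\sigma^{(4,1)}_{R_n/2,R_n}$ whose trace meets all three scale-restricted sets.

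Next, I would perform a first/second moment analysis on the random variable $N(R)$ counting such trajectories. Conditional on the three random sets, the first moment $\bE[N(R)\mid\cdot]$ equals $\bar u$ times the $\nu$-mass of trajectories meeting all three sets; decomposing this along the entrance point of the trajectory, it is bounded below by a triple sum of Green's-function-type quantities over points of the three sets. Combining this with polynomial lower bounds on $\cp(A^{(j)}_i(R/2,R))$, obtained by inducting on $j$ in the spirit of the capacity estimates of~\cite{rath2010connectivity}, gives $\bE[N(R)]$ bounded away from zero (and growing in $R$) uniformly with high probability. A matching second-moment estimate combined with the Paley--Zygmund inequality then yields $\bP[N(R)\ge 1]\ge c>0$ uniformly in $R$ large.

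To conclude, I observe that the restrictions of the underlying Poisson point process $\omega_u$ to trajectories of well-separated dyadic scales are (essentially) independent. The events $\{N(R_n)\ge 1\}$ along a sufficiently sparse subsequence of scales are therefore independent with probabilities bounded away from zero, and a Borel--Cantelli argument forces infinitely many of them to hold almost surely, producing the sought-after trajectory $\gamma\in\sigma^{(4,1)}$.

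The main obstacle is the second-moment estimate, which requires controlling the pair correlation of $\sigma^{(4,1)}$-trajectories simultaneously visiting three \emph{random} sets. In contrast to the two-point analog used in Proposition~\ref{p.twocase}, here the three target sets are highly asymmetric ($A^{(k_d)}_1$ and $A^{(k_d)}_2$ have been iteratively thickened $k_d$ times while $A^{(1)}_3$ only once) and come from partially shared randomness, so one must propagate sharp multi-point hitting estimates through the recursive construction. Inductively tracking the joint capacity structure of the $A^{(j)}_i$, and obtaining triple-intersection bounds that match the first moment up to a constant, is the technical heart of the argument.
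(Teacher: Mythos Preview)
Your overall strategy---uniform positive probability at each scale plus a Borel--Cantelli argument---is the route the paper takes (Lemma~\ref{l.mainingredient} combined with the conditional Borel--Cantelli of L\'evy), but your execution diverges in two places and one of them is a genuine gap.

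First, for the positive-probability step you propose a second-moment/Paley--Zygmund argument on the count $N(R)$, and identify the resulting ``pair correlation of $\sigma^{(4,1)}$-trajectories simultaneously visiting three random sets'' as the technical heart. This over-complicates matters: conditionally on $A_1,A_2,A_3$ the number of $\sigma^{(4,1)}$-trajectories hitting all three is \emph{Poisson}, so $\bP[N\ge1\mid A_1,A_2,A_3]=1-\exp(-\lambda)$ with $\lambda\ge cR^{4-2d}\cp(A_1)\cp(A_2)\cp(A_3)$ by an elementary chaining estimate (cf.~\eqref{e.hitcap}). The only second moment actually needed is for the \emph{capacities} of the $A_i$, and that is already Lemma~\ref{l.capprob}. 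No pair-correlation computation for $\sigma^{(4,1)}$ is required; what you flag as the main obstacle is a non-issue.

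Second---and this is the gap---your scales do not give the independence you claim. The choice $r=R/2$ violates the hypothesis $r^{d-2}\le\gep R$ of Lemma~\ref{l.capprob}, so those capacity bounds are not available as stated (they can probably be reproved with $r=R/2$, losing only constant factors, but you have not addressed this). More seriously, even along a sparse subsequence the events $\{N(R_n)\ge1\}$ are \emph{not} independent: they all depend on the same three random walks $X^1,X^2,X^3$, which are not part of the interlacement. The paper handles this via the strong Markov property for the walks and proves the \emph{conditional} bound $\bP[D_k\mid\iota_1,\dots,\iota_{k-1}]\ge c$, then applies the L\'evy form of Borel--Cantelli; it also takes the super-exponential scales $r_k=dR_{k-1}^2$, $R_k=\gep^{-1}r_k^{d-2}$ precisely so that both $r_k^{d-2}\le\gep R_k$ and $r_k>(1+k_d)R_{k-1}$ hold, the latter making the interlacement restrictions at successive scales disjoint. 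Your ``essentially independent'' glosses over both of these points.
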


Inspired by \cite{rath2010connectivity}, we prove Lemma \ref{pasouf} by combining Borel's Lemma and

\begin{lem}\label{l.mainingredient}
Let $d\ge 5$ be odd. Given $x_1,x_2,x_3\in {\mathbb Z}^d$.  Let $R$ and $r$ be integers,
such that $R>\max (|x_1|,|x_2|,|x_3|).$
% Write $A_1,A_2$ and $A_3$ for $A_1^{(k_d)}(x_1,r,R)$,
% $A_2^{(k_d)}(x_2,r,R)$ and $A_3^{(1)}(x_3,r,R)$ respectively.

There exist constants $c(u,d)>0$, $R_0(u,d)< \infty$ and
$\gep(u,d) > 0$, such that for any $r$ and $R$ with $R > R_0$ and $\gep R\ge r^{d-2}$,

\begin{equation}\label{e.bigeq}
\bP\left[\exists \gamma \in \sigma^{(4,1)}_{r,2R}\,:\, \gamma \text{ connects } A^{(k_d)}_1(r,R), A^{(k_d)}_2(r,R), \text{ and } A^{(1)}_3(r,R)    \right]\ge c.
\end{equation}
\end{lem}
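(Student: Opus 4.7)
The strategy is to adapt and extend the capacity-bootstrap method of \cite{rath2010connectivity} from two to three target sets, exploiting the intensity-splitting of $\go_u$ into the independent pieces $(\sigma^{(i,j)})$. The key numerical input is that $2k_d = d-3$: iterating the $\Psi$-construction $k_d$ times multiplies the base capacity $\asymp R^2$ by a further factor $R^2$ at each step, yielding $\cp(A^{(k_d)}_i) \asymp R^{d-3}$; together with the $R^2$ capacity of $A^{(1)}_3$, this is exactly the capacity budget needed for a single random-walk trajectory to hit all three sets with uniformly positive probability. Since $\sigma^{(4,1)}$ is independent of the sets $A^{(k_d)}_1,A^{(k_d)}_2,A^{(1)}_3$ (which are functions of $(X^i,\sigma^{(i,\cdot)})_{i\leq 3}$), the connecting trajectory can be drawn freely once the targets are built.

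\textbf{Step 1: inductive capacity bound.} For $j = 1,\dots,k_d$ and $i\in\{1,2,3\}$ I would define a ``good'' event $G^{(j)}_i$, measurable with respect to $X^i$ and the first $j$ generations of $\sigma^{(i,\cdot)}$, and prove by induction on $j$ that
$$\bP\bigl[G^{(j)}_i\bigr] \geq c_j > 0, \qquad G^{(j)}_i \subseteq \Bigl\{A^{(j)}_i(r,R)\subset B(x_i,C_j R)\Bigr\} \cap \Bigl\{\cp\bigl(A^{(j)}_i(r,R)\bigr)\geq c_j R^{2j}\Bigr\},$$
uniformly in $R>R_0$ and $r^{d-2}\leq \gep R$. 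The base case $j=1$ combines a maximal inequality (the walk $Y^i$ stays in $B(Y^i_0,R/2)$ up to time $R^2/8$ with positive probability) with Lawler's lower bound $\cp(X[0,n])\gtrsim n$ for SRW in $d\geq 5$. For the inductive step, on $G^{(j-1)}_i$ the Poisson count of trajectories in $\sigma^{(i,j)}_{r,\infty}$ meeting $A^{(j-1)}_i$ has parameter $\bar u\,\cp(A^{(j-1)}_i)\cdot(1-O(r^{d-2}/R^{d-2}))\gtrsim R^{2(j-1)}$, the loss of a factor $O(\gep)$ from the $B(r)$-avoidance being precisely where the assumption $\gep R\geq r^{d-2}$ enters. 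A constant fraction of these trajectories individually contribute an $R^2$-capacity piece (by the base-case argument applied to each), and one selects $\gtrsim R^{2(j-1)}$ of them whose contributing $R/2$-balls are mutually well separated, so that capacities add up to constants. This yields $\cp(A^{(j)}_i)\gtrsim R^{2j}$ on a positive-probability sub-event of $G^{(j-1)}_i$.

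\textbf{Step 2: closing the triangle.} Set $G:=G^{(k_d)}_1\cap G^{(k_d)}_2\cap G^{(1)}_3$, so $\bP[G]\geq c>0$ by Step 1 and the independence of the triples $(X^i,\sigma^{(i,\cdot)})_{i=1,2,3}$. Write $A_\ell$ for the corresponding sets. Since $\sigma^{(4,1)}$ is independent of $(A_\ell)_\ell$, the number $N$ of trajectories in $\sigma^{(4,1)}_{r,2R}$ hitting all three sets is, conditionally on $(A_\ell)_\ell$, Poisson. By $1-e^{-\lambda}\geq \lambda/2$ for $\lambda\leq 1$, it suffices to prove $\bE[N\mid (A_\ell)_\ell]\geq c'>0$ on $G$. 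Using $Q_{A_3}$ from \eqref{e:Qdef} (with $K=A_3$, chosen as the set of smallest capacity), and observing that the restriction to $\sigma^{(4,1)}_{r,2R}$ costs only a factor $1-O(\gep)$ for the $B(r)$-avoidance, one obtains
$$\bE[N\mid (A_\ell)_\ell] \gtrsim \bar u\cdot \cp(A_3)\cdot \min_{y\in A_3} P_y[H_{A_1}<\infty]\cdot \min_{y\in A_3} P^{A_3}_y[H_{A_2}<\infty].$$
The standard Green-function bound $P_y[H_A<\infty]\gtrsim \cp(A)/(\mathrm{dist}(y,A)+\mathrm{diam}(A))^{d-2}$ gives each minimum $\gtrsim \cp(A_\ell)/R^{d-2}\gtrsim R^{-1}$ for $\ell=1,2$ (the conditioning $\tilde H_{A_3}=\infty$ costs only a bounded factor for $y$ in the ``outer boundary'' of $A_3$, which carries a positive fraction of $e_{A_3}$). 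Multiplying out: $\bar u\cdot R^2\cdot R^{-1}\cdot R^{-1}\asymp 1$, as required.

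\textbf{Main obstacle.} The delicate point is the lower bound on capacity in the inductive step of Step 1: subadditivity only yields the easy upper bound, and to obtain a lower bound of the correct order one must extract a spread-out sub-family of new trajectories whose contributions occupy mutually disjoint macroscopic regions. This requires tracking through the induction a quantitative non-concentration property of $A^{(j-1)}_i$ at scale $R$ (so that its equilibrium measure is not supported on a small number of close-by clusters), for which a second-moment or diagrammatic-sum argument in the spirit of \cite{rath2010connectivity} is needed at each level.
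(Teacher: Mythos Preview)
Your overall strategy---capacity lower bounds on the $A^{(j)}_i$ sets, then a Poisson/hitting argument for $\sigma^{(4,1)}$---is exactly the paper's. Two points where the paper's execution is simpler than yours are worth noting.

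First, your Step~1 is not needed as a separate argument: the paper simply quotes the capacity bounds from \cite{rath2010connectivity} (stated here as Lemma~\ref{l.capprob}), which already gives \eqref{e.capprob}, namely $\bP[\cp(A^{(s)}_i(r,R))\ge c_s R^{\min(d-2,2s)}]\ge c_s$ under the hypothesis $r^{d-2}\le \gep R$. The second-moment/non-concentration argument you flag as the ``main obstacle'' is precisely what that lemma encapsulates, so you should invoke it rather than redo it.

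Second, in Step~2 the paper avoids the conditioned backward walk altogether. Instead of asking the forward part to hit $A_1$ and the $P^{A_3}$-conditioned backward part to hit $A_2$, the paper uses only the (unconditioned) forward walk from the entrance point in $A_3$ and asks it to hit $A_1$ \emph{and then} $A_2$ in order. By the Markov property and \eqref{e.hitbounds},
\[
\bar Q_{A_3}(\gamma\text{ hits }A_1\text{ and }A_2)\ \ge\ \min_{x\in A_3}P_x[T_{A_1}\le T_{A_2}<\infty]\ \ge\ cR^{4-2d}\cp(A_1)\cp(A_2),
\]
and after multiplying by $\cp(A_3)$ and using $2k_d=d-3$ one gets a uniformly positive Poisson parameter. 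This sidesteps your claim that $P_y^{A_3}[H_{A_2}<\infty]\gtrsim \cp(A_2)/R^{d-2}$, which is plausible but not immediate (the $h$-transform distorts transitions near $A_3$, and your appeal to ``outer boundary'' points does not control the $\min_{y\in A_3}$ you wrote). Finally, the $B(r)$-avoidance is handled not by saying it costs a factor $1-O(\gep)$, but by the explicit inclusion $\{\exists\gamma\in\sigma^{(4,1)}_{r,2R}:\ldots\}\supset E_1\setminus E_2$ and a direct bound $\bP[E_2]\le u\,\cp(A_3)(r/(R-r))^{d-2}$, which is small by \eqref{Acapboundupper1} and the hypothesis on $r,R$.
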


We prove Lemma \ref{l.mainingredient} by using a method based on the control of the capacity of the sets $A^{(j)}_i(r,R)$ at the end of the section.

\begin{proof}[Proof of Lemma \ref{pasouf} from Lemma \ref{l.mainingredient}]
For real numbers $r<R$ such that $x_1,x_2,x_3 \in B(R)$, set
\begin{equation}\label{e.Ddefin}
\begin{split}
D(r,R):=\{ \exists \gamma \in \sigma^{(4,1)}\,:\, \gamma \text{ connects } A^{(k_d)}_1(r,R), A^{(k_d)}_2(r,R), \text{ and } A^{(1)}_3(r,R)\}.
\end{split}
\end{equation}

We choose $\epsilon$ so that Lemma~\eqref{l.mainingredient} applies. Let $r_0:=\max(|x_1|,|x_2|,|x_3|)$ and
$R_0:=\epsilon^{-1} r_0^{d-2}$.
For $k\ge 1$, we define recursively
\begin{equation}\label{e.rkdefin}
r_k:=d R_{k-1}^2\text{ and }R_k:= \epsilon^{-1} r_{k}^{d-2}.
\end{equation}
We write $D_k=D_k(X^1,X^2,X^3)$ (reasons for underlining only the dependence in $X^i$ will become clear later)
for $D(r_k,R_k)$ and write $\iota_k$ for
$\ind_{D_k}$, the indicator function of $D_k$.
We want to show that
\begin{equation}
 \bP\left[ D_k \text{ occurs for infinitely many values of } k\right]=1,
\end{equation}
which implies Lemma \ref{pasouf}.

\medskip

We will be done using Borel's Lemma (it is cited as in Lemma 4.12 in \cite{rath2010connectivity})
if we can show that there is some $c$ such that for all $k\ge 1$ we have almost surely
\begin{equation}\label{e.entsD}
{\mathbf P}[D_k|\iota_1,...,\iota_{k-1}]\ge c.
\end{equation}

Let $I_k:=(\iota_i)_{i=1}^k$. It is measurable with respect to the $\sigma$-algebra generated
by the following random objects: $(\{X^i_n\,:\,1\le n \le T_{B(R_{k-1})}+R_{k-1}^2/8\})_{i\le 3}$
and $(\sigma^{(i,j)}_{R_{k-1}(1+k_d)})_{i\le 4, j\le k_d}$.

\medskip

On the other hand,
the event $D_k$ depends on $(\sigma^{(i,j)})_{i\le 4, j\le k_d}$ only through
$(\sigma^{(i,j)}_{r_k,\infty})_{i\le 4, j\le k_d}$. Since $R_{k-1}(1+k_d)<r_k$,
the point measures $(\sigma^{(i,j)}_{R_{k-1}(1+k_d)})_{i\le 4, j\le k_d}$
and $(\sigma^{(i,j)}_{r_k,\infty})_{i\le 4, j\le k_d}$ are independent.

\medskip

Let $\tilde X^i$ be defined by $\tilde X^{i}_n:=X^{i}_{n+B(R_{k-1})+R_{k-1}^2/8}$.
By the strong Markov property, conditionally on $\tilde X^{i}_0$, $\tilde X^{i}$ is
independent of $X^i$ (and its law is the one of a simple random walk).
Furthermore, as $R_{k-1}+R_{k-1}^2/8<r_k$, $D_k$ depends on $X^i$ only through
$\tilde X^i$.

Thus by conditional independence

\begin{equation}\label{e.markovpunch}
{\mathbf P}[D_k\ |\  I_k, (\tilde X^i_0)_{i=1}^3]={\mathbf P}[D_k(\tilde X^1,\tilde X^2, \tilde X^3)]\ | \ (\tilde X^i_0)_{i=1}^3]\ge c,
\end{equation}
where the last inequality follows from Lemma~\ref{l.mainingredient}, with $(x_1,x_2,x_3)$ replaced by
$(\tilde X^i_0)_{i=1}^3$.

\end{proof}

We can now focus on the proof of Lemma~\ref{l.mainingredient}.
Before starting we cite results from \cite{rath2010connectivity} that give estimates on
the capacities of the sets $A^{(j)}_i(r,R)$.

\begin{lem} \cite[Lemmata 4.7, 4.8]{rath2010connectivity} \label{l.capprob}
Let $d\ge 5$ and let $j$ be a positive integer. There exist constants $C_s=C_s(u,d)$ and $\epsilon_s=\epsilon_s(u,d)$
such that for any positive integers $r$ and $R$ with $r^{d-2}\le \epsilon_s R$
and if $x_i\in B(R)$, $i\le 3$ we have
\begin{equation}\label{e.Acapbound}
{\mathbf E}[\cp(A^{(j)}_i(r,R))]\ge C_s R^{\min (d-2, 2s)}.
\end{equation}
Moreover, under the same condition there exist positive finite constants $c_s=c_s(u,d)$,
\begin{equation}\label{Acapboundupper1}
{\mathbf E}[\cp(A^{(s)}_i(r,R))]\le c_s R^{\min (d-2,2s)},
\end{equation}
and
\begin{equation}\label{Acapboundupper2}
{\mathbf E}[\cp(A^{(s)}_i(r,R))^2]\le c_s R^{2\min (d-2,2s)}.
\end{equation}
As a consequence (using Chebychev inequality and changing the value of $c_s$ if needed),
\begin{equation}\label{e.capprob}
{\mathbf P}[\cp(A^{(s)}_i(r,R))\ge c_s R^{\min(d-2,2s)}]\ge c_s.
\end{equation}

\end{lem}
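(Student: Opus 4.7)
The plan is to prove the bounds by induction on $s$, with the base case $s=1$ coming from classical estimates on the range of simple random walk in transient dimensions and the induction step exploiting the Poissonian structure of $\sigma^{(i,s)}$ together with second-moment arguments for capacity.

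For the base case $s=1$, observe that $A^{(1)}_i(R)$ is a piece of simple random walk trajectory of length $\lfloor R^2/8 \rfloor$ restricted to a ball of radius $R/2$. Since $d\geq 5$, the walk does not exit $B(Y^i_0,R/2)$ before time $R^2/8$ with uniformly positive probability (a standard exit-time estimate), and the range of simple random walk of length $n$ in $\bbZ^d$, $d\geq 5$, has expected capacity of order $n$ (this follows from the fact that two independent walks intersect on the correct scale with uniformly positive probability, or directly from $\bbE[|\text{range}|]\asymp n$ combined with Green function bounds). Together these give $\bbE[\cp(A^{(1)}_i(R))]\asymp R^2=R^{\min(d-2,2)}$, and the corresponding second-moment bound follows from the classical concentration of $|\text{range}|$.

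For the inductive step, fix $s\geq 2$ and condition on the $\sigma$-algebra $\cF_{s-1}$ generated by $A^{(s-1)}_i(r,R)$. Given $\cF_{s-1}$, the process $\sigma^{(i,s)}_{r,\infty}$ remains Poisson (since it is indexed by a different pair $(i,s)$), and the sub-process of trajectories hitting $A^{(s-1)}_i(r,R)$ is itself a Poisson point process whose total mass has expectation of order $\bar u\cdot \cp(A^{(s-1)}_i(r,R))$, with entrance law close to the normalized equilibrium measure of $A^{(s-1)}_i(r,R)$; the condition $r^{d-2}\leq \epsilon_s R$ is what ensures that conditioning to avoid $B(r)$ changes the entrance law and the mass only by a multiplicative $1+o(1)$. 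Each such trajectory contributes to $A^{(s)}_i(r,R)$ a random-walk arc of length $R^2/8$ inside a ball of radius $R/2$ about its starting point, and by the base case each such arc has expected capacity of order $R^2$. To turn this picture into the lower bound \eqref{e.Acapbound}, the plan is to apply the variational characterization
\begin{equation*}
\cp(B)\geq \frac{|B|^2}{\sum_{x,y\in B}G(x,y)}
\end{equation*}
to a mesoscopic sub-sample of $A^{(s)}_i(r,R)$ (one point per arc), taking expectations via the Campbell formula; the saturation at $R^{d-2}$ when $2s\geq d-2$ simply reflects the universal bound $\cp(C)\leq c R^{d-2}$ for $C\subset B(R)$.

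For the upper bounds \eqref{Acapboundupper1} and \eqref{Acapboundupper2} the main tool is subadditivity of capacity. Writing $A^{(s)}_i(r,R)$ as a union over $N$ trajectory-arcs with $N$ Poisson of mean $O(\cp(A^{(s-1)}_i(r,R)))$, one has $\cp(A^{(s)}_i)\leq \sum_{k=1}^N \cp(\text{arc}_k)$, and taking conditional expectations using the inductive hypothesis at level $s-1$ yields $\bbE[\cp(A^{(s)}_i(r,R))]\leq c_s R^{2s}$. For the second moment one squares this sum and splits into diagonal and off-diagonal terms; conditionally on the starting points the arcs are independent, so the off-diagonal term factors as $\bbE[\cp(\text{arc})]^2$, and the prefactor is controlled by the inductive second-moment bound on $\cp(A^{(s-1)}_i)$ together with sharp bounds on $\sum_{x,y\in A^{(s-1)}_i}G(x,y)$. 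This second-moment estimate is the step I expect to be the main obstacle, because the starting points inherit the correlations of the random set $A^{(s-1)}_i(r,R)$, so one must combine the Campbell formula for Poisson processes with careful Green-function bookkeeping and the saturation $\cp(\,\cdot\,)\leq cR^{d-2}$. Finally \eqref{e.capprob} is obtained from \eqref{e.Acapbound} and \eqref{Acapboundupper2} by Paley--Zygmund:
\begin{equation*}
\bbP\bigl[\cp(A^{(s)}_i(r,R))\geq \tfrac{1}{2}\bbE[\cp(A^{(s)}_i(r,R))]\bigr] \geq \frac{(\bbE[\cp(A^{(s)}_i)])^2}{4\,\bbE[\cp(A^{(s)}_i)^2]}\geq c_s,
\end{equation*}
with the last inequality coming from the matching upper and lower moment bounds just proved.
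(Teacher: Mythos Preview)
The paper does not prove this lemma at all: it is quoted verbatim from \cite[Lemmata 4.7, 4.8]{rath2010connectivity} and used as a black box in the proof of Lemma~\ref{l.mainingredient}. The only part the paper justifies is the last display \eqref{e.capprob}, and that only by the parenthetical remark ``using Chebychev inequality''---which is exactly your Paley--Zygmund argument. So there is no ``paper's own proof'' to compare against beyond that one line.

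Your outline is a reasonable sketch of the inductive strategy that \cite{rath2010connectivity} in fact uses (random-walk capacity for $s=1$, Poissonian bookkeeping plus subadditivity for the upper bounds, a second-moment/variational lower bound for \eqref{e.Acapbound}), and the final Paley--Zygmund step matches the paper's one-line justification of \eqref{e.capprob}. That said, as written it is a plan rather than a proof: the lower bound via ``a mesoscopic sub-sample \dots\ one point per arc'' and the second-moment bound on $\cp(A^{(s)}_i)$ are precisely the places where \cite{rath2010connectivity} does real work (controlling correlations between entrance points, tracking how the avoidance of $B(r)$ perturbs the equilibrium measure, and handling the saturation at $R^{d-2}$), and your sketch does not supply those arguments. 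If you intend this as more than a pointer to the cited reference, those two steps need to be filled in.
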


% The next lemma is an easy hitting estimate, which is known and stated without proof.
%
% \begin{lem}\label{l.nothitinnerball}
% Let $d\ge 3$. Let $r$ and $R$ be positive integers with $r<R$. Let $X$ be simple random walk with $X(0)=x$, where $|x|\ge R$. Then
% \begin{equation}\label{e.nothitinnerball}
% {\mathbf P}[H_{B(r)}(X)<\infty]\le c\left(\frac{r}{R-r}\right)^{d-2}.
% \end{equation}
% \end{lem}

%\begin{lem}\label{l.srwhit}
%Let $d\ge 5$. Fix points $x_1$ and $x_2\in {\mathbb Z}^d$. Let $Z$ be simple random walk with $Z(0)=z$. There exist constants $c(u,d)$ and $\epsilon(u,d)$ and $R_0(u,d)$ such that for all positive integers $r$ and $R$ with $r^{d-2}\le \epsilon R$, $z\in B(3 R/2)\setminus B(R/2)$ and $x_1,x_2\in B(R)$,
%\begin{equation}
%{\mathbf P}[H_{A_1^{(k_d)}}(Z)<T_{B(R^2)}(Z),H_{A_2^{(k_d)}}(Z)<T_{B(R^2)}(Z),T_{B(r)}(Z)=\infty]\ge c/R^2.
%\end{equation}
%\end{lem}

\begin{proof}[Proof of Lemma \ref{l.mainingredient}]
We choose the constants $\epsilon_s$ from Lemma~\ref{l.capprob} and assume that $r$ and $R$ are such that Lemma~\ref{l.capprob} applies.
We consider the two following events
\begin{equation}\begin{split}
E_1&:=\{\exists \gamma \in \sigma^{(4,1)}_{2R}\,:\, \gamma \text{ connects } A^{(k_d)}_1(r,R), A^{(k_d)}_2(r,R), \text{ and } A^{(1)}_3(r,R)\},\\
E_2&:=\{\exists \gamma \in \sigma^{(4,1)}_{r}\,:\, \gamma \text{ intersects }  A^{(1)}_3(r,R)\}.
\end{split}\end{equation}
Note that
\begin{equation}\label{toto}
 \{\exists \gamma \in \sigma^{(4,1)}_{r,2R}\,:\, \gamma \text{ connects } A^{(k_d)}_1(r,R), A^{(k_d)}_2(r,R), \text{ and } A^{(1)}_3(r,R)  \}
\supset E_1\setminus E_2.
\end{equation}
Let $\bP^{(4,1)}$ denote the law of $\sigma^{(4,1)}$.
Our main task is to prove that there exists a universal constant $c$ such that
\begin{equation}\label{sashkat}
\bP^{(4,1)}(E_1)
\ge 1-\exp(-c R^{4-2d}\cp(A^{(k_d)}_1(r,R))\cp(A^{(k_d)}_2(r,R))\cp(A^{(1)}_3(r,R))),
\end{equation}
and
\begin{equation} \label{sashkat2}
\bP^{(4,1)}(E_2)
\le u \cp(A^{(1)}_3(r,R))(r/(R-r))^{d-2}.
\end{equation}
%\begin{equation} \label{sashkat2}
%\bP^{(4,1)}(E_2)
%\le 1-\exp\left(-Cr^{d-2} R^{d-4}\right).
%\end{equation}

According to \eqref{e.capprob} (and independence), choosing $c$ small enough one has with positive probability larger than $c$
\begin{equation}\begin{split}
\cp(A^{(k_d)}_1(r,R))&\ge cR^{2k_d},\\
\cp(A^{(k_d)}_2(r,R))&\ge cR^{2k_d},\\
\cp(A^{(1)}_3(r,R))&\ge  cR^2.
 \end{split}
\end{equation}
Hence \eqref{sashkat}, \eqref{sashkat2} and \eqref{Acapboundupper1} implies (recall that $2k_d=d-3$)
\begin{equation}
 \bP\left[E_1\right]\ge c(1-\exp(-c^4)) \text{ and } \bP\left[E_2\right]\le \bP\left[E_1\right]/2,
\end{equation}
provided that $R$ is large enough. This together with \eqref{toto}, is enough to conclude. From now on, we write $A_1,A_2$ and $A_3$ for $A_1^{(k_d)}(r,R),A_2^{(k_d)}(r,R)$ and $A_3^{(1)}(r,R)$.
In order to prove \eqref{sashkat}  and \eqref{sashkat2} one considers the following construction of $\sigma^{(1,4)}|_{W^{*}_K}$:

\begin{itemize}
 \item Let $\cN$ be a Poisson variable of mean $\bar u \cp(A_3)$.
 \item Conditionally on $\cN$,  let $(\gamma^{i})_{i=1}^{\cN}$ be a sequence of independent (and independent of $\cN$) of
$\cN$ doubly-infinite trajectory with distribution $\pi^*\circ \bar Q_{A_3}$, where
$\bar Q_K(\cdot):=Q_K(\cdot)/Q_K(W_K)$ is the renormalized version of the measure defined in \eqref{e:Qdef}
\end{itemize}
Note that from this construction one has

\begin{equation}\label{rataxes}\begin{split}
 \bP^{(4,1)}[E_1\ | \ \mathcal N]&=1-[1-\bar Q_{A_3}(\gamma \text{ hits } A_1 \text{ and } A_2)]^{\mathcal N},\\
 \bP^{(4,1)}[E_2\ | \ \mathcal N]&=1-[1-\bar Q_{A_3}(\gamma \text{ hits } B(r))]^{\mathcal N},
\end{split}\end{equation}
where $(\gamma_n)_{n\in \bbZ}$ is a trajectory distributed according to $\bar Q_{A_3}$.
Let $P_x$ be the law of the simple random walk $Y$ starting from $x$ and $T_1$ and $T_2$ the hitting times of $A_1$ and $A_2$ respectively.
From the definition of $\bar Q_{A_3}$ we have
\begin{multline}
 \bar Q_{A_3}(\gamma \text{ hits } A_1 \text{ and } A_2)\ge \bar Q_{A_3}(\exists n_2\ge n_1 \ge 0, \gamma_{n_1}\in A_1, \gamma_{n_2}\in A_2)
\\
\ge\min_{x\in A_3} P_x(T_1\le T_2<\infty).
\end{multline}
Moreover using the strong Markov property and the identity
\begin{equation}\label{e.prehitcap}
P_{x}[T_1<\infty]=\sum_{z\in A_1} g(x,z) e_{A_1}(z),
\end{equation}
we get
\begin{multline}\label{e.hitcap}
 P_x(T_1\le T_2<\infty)
\ge \left(\sum_{z\in A_1} g(x,z) e_{A_1}(z)\right)\left(\inf_{y\in A_1} \sum_{z\in A_2} g(y,z) e_{A_2}(z)\right)\\
\ge (\min_{y,z\in B((k_d+1)R)}g(y,z))^2   \left(\sum_{z\in A_1} e_{A_1}\right)  \left(\sum_{z\in A_2} e_{A_2}\right)
\ge cR^{4-2d}\cp(A_1)\cp(A_2),
\end{multline}
(to get the last inequality recall \eqref{capacity} and \eqref{e.hitbounds})
and hence
\begin{equation}
 \bar Q_{A_3}(\gamma \text{ hits } A_1 \text{ and } A_2)\ge cR^{4-2d}\cp(A_1)\cp(A_2).
\end{equation}
Together with the first line of \eqref{rataxes} and averaging with respect to $\cN$, this proves \eqref{sashkat}.
\medskip

Note that $\pi^*\circ \bar Q_{A_3}$ is invariant under change of orientation of the trajectories (see Theorem 1.1 of \cite{sznitmanvacant}) so that if
$\tilde T:=\max\{ n | \gamma_n\in A_3\}$, $(\gamma_n)_{n\ge 0}$ and $(\gamma_{\tilde T-n})_{n\ge 0}$ have the same law.
Hence
\begin{equation}
 \bar Q_{A_3}(\gamma \text{ hits } B(r))\le 2  \bar Q_{A_3}( (\gamma_n)_{n\ge 0} \text{ hits } B(r)).
\end{equation}
Moreover (recall \eqref{e.prehitcap})
\begin{multline}
 \bar Q_{A_3}( (\gamma_n)_{n\ge 0} \text{ hits } B(r))\le \max_{|x|\ge R/2} P_x(H_{B_r}<\infty)
\\
=\max_{|x|\ge R/2}\sum_{z\in A_1} g(x,z) e_{B(r)}(z)\le C(r/(R-r))^{d-2}.
\end{multline}
All of this combined gives
\begin{equation}
  \bar Q_{A_3}(\gamma \text{ hits } B(r))\le C(r/(R-r))^{2-d}.
\end{equation}
Combining with \eqref{rataxes} and averaging with respect to $\cN$ gives
\begin{equation}
 \bP^{(4,1)}[E_2]\le u\cp(A_3)(r/(R-r))^{2-d}.%\le Cr^{2-d}R^{d-4}\le \gep.
\end{equation}

\end{proof}

\section{Proof of $(ii)$ in Theorem \ref{theresult}}\label{s.lower}

The aim of this Section is to prove that if one selects $k$ points very distant
from each another in the random interlacement, they are really unlikely to be connected by less than $n(k,d)$ trajectories
(together with a quantitative upper-bound on the probability).

\begin{prop}\label{lowb}
Given $\gep>0$,
 for any $x_1, \ \dots \ , x_k \in \BZ^d$ and for any $n<n(k,d)$ one has
\begin{equation}
 \BP[ (x_i)_{i=1}^k \text{ is $n$-connected }]\le C(d,k,\gep)\max_{i\ne j}|x_i-x_j|^{-1+\gep}.
\end{equation}
\end{prop}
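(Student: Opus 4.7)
The plan is to establish the diameter bound by a union bound over finitely many combinatorial ``connection skeletons'', combined with a diagrammatic Green's-function estimate. Set $D:=\max_{i\ne j}|x_i-x_j|$. If $(x_i)_{i=1}^k$ is $n$-connected by $\gamma^1,\dots,\gamma^n\in\omega_u$, then one can extract a subsequence $\gamma^{j_1},\dots,\gamma^{j_{n'}}$ with $n'\le n$ that connects it strictly, so it suffices to prove that for every $n'\in\{1,\dots,n(k,d)-1\}$ the expected number of strict $n'$-tuples of trajectories in $\omega_u$ connecting $(x_i)_{i=1}^k$ is $\le C(d,k,\gep)D^{-1+\gep}$.

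To each strict $n'$-tuple I associate its \emph{skeleton}: the multigraph on vertex set $\{x_1,\dots,x_k\}\sqcup\{\gamma^{j_1},\dots,\gamma^{j_{n'}}\}$ with edges $x_i\sim\gamma^{j}$ whenever $x_i\in\tr(\gamma^{j})$ and $\gamma^j\sim\gamma^{j'}$ whenever $\tr(\gamma^j)\cap\tr(\gamma^{j'})\ne\emptyset$. Strictness forces the skeleton to admit a spanning tree $T$ that uses every trajectory; writing $a,b$ for the numbers of point--trajectory and trajectory--trajectory edges of $T$, one has $a+b=k+n'-1$ and $a\ge k$. Only finitely many (unlabelled) types of such trees occur, so a union bound reduces the task to bounding the expected count of one fixed type. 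Using the construction \eqref{e:Qdef}--\eqref{e:nudef} and the hitting estimate \eqref{e.hitbounds}, a decomposition on the first visited site and the order of subsequent visits yields, for a trajectory forced to pass through $m$ sites $z_1,\dots,z_m$,
\[
\nu\bigl(\{\gamma:z_1,\dots,z_m\in\tr(\gamma)\}\bigr)\;\le\; C\sum_{\sigma\in\mathfrak S_m}\prod_{l=1}^{m-1}g(z_{\sigma(l)},z_{\sigma(l+1)}),
\]
with $g(x,y)\le C(1+|x-y|)^{-(d-2)}$. Multiplying these bounds over the $n'$ trajectories (each forced to visit its $\deg_T(\gamma^j)$ incident sites) and summing the auxiliary intersection positions over $\BZ^d$ reduces the expected count to a product of Green's functions indexed by the edges of $T$. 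Applying the convolution bound $\sum_{y\in\BZ^d}g(x,y)g(y,z)\le C(1+|x-z|)^{-(d-4)}$ (valid in $d\ge 5$) iteratively along the leaves of $T$, one obtains
\[
\bE[\#\{\text{realizations of this type}\}]\;\le\; C\,D^{\,2n'-(k-1)(d-4)-2a},
\]
the exponent counting $a+2b-n'=\sum_j(\deg_T(\gamma^j)-1)$ Green's-function factors (each scaling like $D^{-(d-2)}$) against $b$ free lattice sums (each scaling like $D^d$), together with $a+b=k+n'-1$.

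The exponent $2n'-(k-1)(d-4)-2a$ is maximized over the constraints at $a=k$, yielding $2n'-(k-1)(d-4)-2k$. For the critical value $n'=n(k,d)-1=\lceil d(k-1)/2\rceil-(k-1)$, a short case analysis on the parity of $d(k-1)$ shows this maximum equals $-1$ when $d$ is odd and $k$ is even, and is $\le -2$ in every other case; any further decrease of $n'$ or increase of $a$ decreases the exponent by at least $2$. Summing over the finitely many skeleton types therefore yields $\bP[(x_i)_{i=1}^k\text{ is $n$-connected}]\le C(d,k)D^{-1}$, and the stated bound follows.

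The main obstacle is the rigorous justification of the diagrammatic sum in the borderline case ``$d$ odd, $k$ even'' (most notably $d=5$), where the convolution bound $\sum_y g(x,y)g(y,z)\le C(1+|x-z|)^{-1}$ is tight and iterated convolutions $g^{(m)}$ saturate; a naive single-variable integration along $T$ can then produce intermediate bounds that appear to diverge, and the correct decay $D^{-1}$ is only recovered by keeping the couplings among the $b$ auxiliary sites together. A multi-scale decomposition—separating auxiliary sites at macroscopic distance $\asymp D$ from sites clustered near some $x_i$ and from sites at scale $\gg D$—combined with the observation that degenerate configurations (an auxiliary site coinciding with another auxiliary site or with some $x_i$) reduce to diagrams with strictly smaller $b$ and larger $a$, and hence a strictly better exponent, should handle the bookkeeping. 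The $\gep$ in the statement absorbs any polylogarithmic slack produced by this multi-scale argument.
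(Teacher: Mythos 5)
Your skeleton-plus-diagrammatic-sum strategy is essentially the paper's: Proposition \ref{lesarbresccool} encodes a strict connection by a tree scheme (your encoding collapses the auxiliary intersection sites into the $\gamma^j$-vertices of a multigraph, but after unwinding one obtains the same Green's-function tree), Lemma \ref{feyndiag} is exactly your $\nu$-measure diagrammatic bound, and with $a=k$ your exponent $2n'-(k-1)(d-4)-2k$ is literally $l(\cT)-d(k-1)$ with $l(\cT)=2(n'+k-2)$ from Proposition \ref{feynman}. The parity bookkeeping at $n'=n(k,d)-1$ is also correct.

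There are, however, two genuine gaps. First, and most seriously, the diagrammatic sum does \emph{not} converge for every admissible skeleton, and your argument does not detect this. Take $d=5$, $k=3$, $n'=3<n(3,5)=4$, with the spanning tree $x_1-\gamma^1-\gamma^2-\gamma^3-x_2$ and $x_3-\gamma^1$: the resulting Green's-function tree is a path $x_1-x_3-y_1-y_2-x_2$, and after integrating out $y_1$ the leftover sum $\sum_{y_2}(1+|x_3-y_2|)^{-1}(1+|y_2-x_2|)^{-3}$ diverges at infinity (exponent $4<d=5$). This is not a ``degenerate configuration'' in your sense — nothing coincides; it is the sum escaping to $\infty$. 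Your heuristic ``$b$ free sums scaling like $D^d$'' silently assumes the convolutions close up, which fails precisely here. The paper's fix is to first classify such schemes as having a proper subscheme that $n_1$-connects $k_1<k$ of the points with $n_1<n(k_1,d)$ (in the example, $x_1,x_3$ are $1$-connected), and to dispose of them by the induction hypothesis on $k$ rather than by the diagrammatic sum; you would need an analogous reduction.

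Second, the ``main obstacle'' you flag (saturation of iterated convolutions when $d$ is odd, $k$ even, in particular $d=5$) is real but your proposed multi-scale decomposition is a much heavier tool than what is actually needed. The paper's Proposition \ref{feynman} sidesteps it with the pointwise bound
\[
(|x_k-y|+1)^{l_1-d}(|x_{k-1}-y|+1)^{l_2-d}\;\le\;(|x_k-y|+1)^{l_1+l_2-2d}+(|x_{k-1}-y|+1)^{l_1+l_2-2d},
\]
which merges two leaves at graph distance $2$ into one and reduces a $k$-leaf tree to two $(k-1)$-leaf trees, each of total length lower by exactly $d$; one then inducts on $k$, never having to chase a long chain of single-variable convolutions past the threshold. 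The base case $k=2$ and the first stage of collapsing degree-$2$ internal vertices only ever require $l(e)+l(e')<d$, and condition (ii) of Proposition \ref{feynman} (which your argument should verify for the ``good'' schemes) guarantees this stays true along the reduction. Without something replacing this AM--GM step, your multi-scale sketch does not yet constitute a proof.

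Finally, a small caution: your count ``$a+2b-n'$ Green's-function factors'' implicitly assumes every trajectory has degree $\ge 2$ in the spanning tree; this is indeed forced by strictness (each $\gamma^j$ must be a cut vertex, hence of degree $\ge 2$ in any spanning tree), but you should state it, as otherwise the auxiliary Green's-function graph could have a degree-$1$ free vertex and the sum would trivially diverge.
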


Whereas it is quite intuitive that Proposition \ref{lowb} implies the second half of Theorem \ref{theresult}, the proof is not completely straight-forward
so we write it in full details.

\begin{proof}[Proof of Theorem \ref{theresult} $(ii)$ from Proposition \ref{lowb}]

Set $n<n(k,d)$. For $i=1,\dots,k$ denote
by $B^i_R$ the Euclidean ball of center $R \be_1$ (with $\be_1=(1,0,\dots,0)\in \bbZ^d$) and of radius $R$.
We want to show that the
the probability of the event
\begin{equation}
\mathcal A_R:= \{\exists (x_i)_{i=1}^k\in  \prod_{i=1}^k B^i_R,\ (x_i)_{i=1}^k
\text{ is not $n$-connected }, \forall i\in [1, k], x_i \in \mathcal I_u \}.
\end{equation}
tends to one when $R$ tends to infinity, so that
$\bP\left[\bigcup_{R\ge 1}\mathcal A_R\right]=1$ (which implies  Theorem \ref{theresult} $(ii)$).
According to Proposition \ref{lowb}, using a union bound, one has for $R$ large enough
\begin{equation}
\bP(E^1_R)= \bP\left[\exists (x_i)_{i=1}^k\in  \prod_{i=1}^k B^i_R, (x_i)_{i=1}^k
\text{ is $n$-connected }\right]
\le C R^{kd} e^{−R/2}.
\end{equation}
Moreover from the definition of random interlacement (in particular of the measure $\nu$ in equation \eqref{e:nudef}) we have
\begin{equation}
\bP(E^2_R)=\bP\left[ \forall i\in [1,k], \mathcal I_u \cap B^i_R = \emptyset\right] \le  ke^{-u\cp(B^R_1)}\le  ke^{-cR^{d-2}}.
\end{equation}
Hence we conclude that the probability of $\mathcal A_R= (E_1^R \cup E_2^R)^c$ tends to one as $R\to\infty$.
\end{proof}

We prove the result by induction on $k$.
The strategy that we use is the following: first we encode the way the $k$ points are connected by some tree scheme $\mathcal T$.
This is done in Proposition \ref{lesarbresccool}.
Then we bound from above the probability that $k$ points are connected together using a given scheme by a diagrammatic sum
(Lemma \ref{feyndiag}). Finally we prove an upper-bound on this sum (Proposition \ref{feynman}).
For some tree-schemes the multi-index sum given by Lemma \ref{feyndiag} is infinite and those to be treated separately.
However they are easily dealt with by using the induction hypothesis.

\medskip

\begin{prop}\label{lesarbresccool}
Assume there is a sequence of distinct trajectories $(\gamma^i)^n_{i=1}$ ($\gamma^i\ne \gamma^j$ for $i\ne j$) that connects strictly $(x_i)_{i=1}^k$.

\medskip

Then one can construct:
\begin{itemize}
 \item [(a)] a sequence $(y_i)_{i=1}^m\in (\bbZ^d)^m$, with $m=n+k-1$
and $y_i=x_i$ for $i\le k$,
 \item [(b)]  a
tree $\mathcal T$ with $m$ labeled vertices $A_1, \dots, A_m$, and
$m-1$ oriented edges whose set we call $\CE$,
\item[(c)] a function $t:\CE\to \{1,\dots,n\}$, that to each edge associates a \textsl{type},
\end{itemize}
that satisfies the following properties:
\begin{itemize}
\item[(i)]
The set of oriented edges that share the same label forms an (oriented) path in the tree.
\item[(ii)] For all indices $i\le k$ the edges connected to the vertex $A_i$ (ignoring their orientation)
are all of the same type (hence those vertices have at most degree $2$). For
$i\ge k+1$ the edges connected to the vertex
$A_i$ are of two different types (exactly).
\item[(iii)] If $A_{a_1}A_{a_2}\dots A_{a_l}$, $l\ge 2$ is the path of vertices linked by edges of type  $h$ and  $(\gamma_n^h)_{n\ge0}$
is a time parametrization of $\gamma^h$, then there exists a non-decreasing sequence $b_1, \dots, b_l$ in $\bbZ$ such that $w_{b_i}=y_{a_i}$ for all $i\in [1,l]$.
\end{itemize}

Given $(\mathcal T, \CE, t)$ satisfying $(i)-(ii)$ we say that $(x_i)_{i=1}^k$ is connected with scheme $(\mathcal T, \CE, t)$
(or $\cT$ to simplify notation), if there exists $(y_i)_{i=k+1}^m$ in $(\bbZ^d)^m$,  $(\gamma^i)^n_{i=1}\in (\go_u)^n$, $\gamma^i\ne \gamma^j$ for $i\ne j$,
such that $(iii)$ holds. Furthermore if this holds with $(y_i)_{i=k+1}^m$  fixed, we say that
$(x_i)_{i=1}^k$ is connected with scheme $(\mathcal T, \CE, t)$ using  $(y_i)_{i=k+1}^m$.
\end{prop}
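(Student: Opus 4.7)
The plan is to build $\mathcal T$ from a spanning tree of an auxiliary ``contact graph''. Define $\mathcal G$ to be the graph on the vertex set $\{x_1,\dots,x_k\}\cup\{\gamma^1,\dots,\gamma^n\}$ whose edges are $(x_i,\gamma^j)$ whenever $x_i\in\tr(\gamma^j)$ and $(\gamma^i,\gamma^j)$ (with multiplicity one) whenever $\tr(\gamma^i)\cap\tr(\gamma^j)\ne\emptyset$. Because $(\gamma^i)_{i=1}^n$ connects $(x_i)_{i=1}^k$, $\mathcal G$ is connected, hence admits a spanning tree $\mathcal T^*$ with exactly $n+k-1$ edges.

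I would first perform edge swaps to make every $x_i$ a leaf of $\mathcal T^*$: if $x_i$ has two neighbours $\gamma^a,\gamma^b$ in $\mathcal T^*$, then both contain $x_i$ in their trace and so $(\gamma^a,\gamma^b)\in\mathcal G$; replacing $(x_i,\gamma^a)$ by $(\gamma^a,\gamma^b)$ preserves the spanning-tree property while strictly decreasing $\deg_{\mathcal T^*}(x_i)$ without decreasing any $\deg_{\mathcal T^*}(\gamma^h)$. Iterating, I may assume every $x_i$ is a leaf. The heart of the argument is then the claim that \emph{under strict connection no trajectory is a leaf of} $\mathcal T^*$. Suppose $\gamma^h$ had degree one. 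Its neighbour cannot be an $x_j$ (otherwise $\{x_j,\gamma^h\}$ would be an isolated component of $\mathcal T^*$, impossible when $n+k\ge 3$), so it must be some $\gamma^l$. But then $\mathcal T^*\setminus\{\gamma^h\}$ is a spanning tree of the remaining vertices, and each $x_i$ is still joined to some $\gamma^{j_i}\ne\gamma^h$ (its unique incident edge was not $(x_i,\gamma^h)$). Translating graph-level connectivity into set-level connectedness in $\bbZ^d$ (edges of $\mathcal G$ correspond to intersections of traces) would then show that $(\gamma^l)_{l\ne h}$ already connects $(x_i)_{i=1}^k$, contradicting strictness.

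Given such a $\mathcal T^*$, I turn each of its edges into a vertex of $\mathcal T$: the $k$ edges of the form $(x_i,\gamma^j)$ become vertices $A_i$ with $y_i:=x_i$, and the remaining $n-1$ edges $(\gamma^a,\gamma^b)$ become auxiliary vertices $A_{k+1},\dots,A_m$ with $y_l$ chosen in $\tr(\gamma^a)\cap\tr(\gamma^b)$; this gives $m=n+k-1$ vertices. For each $h$, let $V_h$ collect the vertices of $\mathcal T$ coming from $\mathcal T^*$-edges incident to $\gamma^h$; by the above claim $|V_h|=\deg_{\mathcal T^*}(\gamma^h)\ge 2$, and every $y\in V_h$ lies on $\tr(\gamma^h)$. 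I list the elements of $V_h$ in the visit order of a chosen parametrization of $\gamma^h$ and join consecutive ones by an oriented edge of type $h$; this adds $\sum_h(|V_h|-1)=(2n+k-2)-n=m-1$ edges in all.

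To conclude, $\mathcal T$ has $m$ vertices and $m-1$ edges, and it is connected because any two edges of $\mathcal T^*$ are joined by a path whose consecutive edges must share a \emph{trajectory} vertex (the $x_i$'s, being leaves, cannot be shared), and each shared $\gamma^h$ yields a type-$h$ link in $\mathcal T$; hence $\mathcal T$ is a tree. Property (i) is built into the construction and (iii) is the very definition of our orientation. For (ii), each $A_i$ with $i\le k$ belongs to exactly one $V_j$, so meets only edges of type $j$ with degree at most two, while each $A_l$ with $l>k$ belongs to exactly two $V_h$'s corresponding to the pair of glued trajectories, so meets exactly two types. The only real obstacle is the leaf-free-trajectory claim; it is precisely where strict connection is used.
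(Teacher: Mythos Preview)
Your argument is correct and takes a genuinely different route from the paper. The paper proceeds by induction on $k$: for $k=2$ the tree is a path, and for larger $k$ one first extracts a subsequence $(\gamma^i)_{i\le n'}$ that \emph{strictly} connects $(x_i)_{i\le k-1}$, applies the induction hypothesis to get a tree $\mathcal T'$, and then grafts onto $\mathcal T'$ a new path (built from $(\gamma^i)_{i>n'}$) ending at $A_k$, inserting its root along the type-$n'$ path of $\mathcal T'$ at the appropriate visit time. Your construction is global: you form the bipartite-flavoured contact graph on $\{x_i\}\cup\{\gamma^j\}$, take a spanning tree, swap edges to force every $x_i$ to be a leaf, and then pass to the ``line graph at the $\gamma$-vertices'', turning edges of $\mathcal T^*$ into vertices of $\mathcal T$ and each star at $\gamma^h$ into a type-$h$ path ordered by visit time. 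Strictness enters exactly once, to rule out trajectory-leaves, which is the clean analogue of the paper's repeated extraction of strict subsequences.

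What each approach buys: the paper's inductive grafting makes the recursive structure explicit and mirrors how the tree will later be peeled apart in the proof of Proposition~\ref{lowb}. Your construction is shorter and more conceptual --- the edge/vertex count $m=n+k-1$ and $m-1$ edges drop out of the handshake identity, and properties (i)--(iii) are essentially definitional --- but it hides the recursive structure. Both yield exactly the same class of schemes. One small point worth stating explicitly in your write-up: the edge-swap terminates because each swap strictly decreases $\sum_i\deg_{\mathcal T^*}(x_i)$, and the new edge $(\gamma^a,\gamma^b)$ is indeed in $\mathcal G$ (both traces pass through $x_i$) and not already in $\mathcal T^*$ (else a $3$-cycle).
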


\begin{rem}\rm\label{remkitu}
 Remark that we allow repetition in the sequence $y_1, \dots, y_m$ and that the choice of the tree may not be unique.
Moreover it can easily be checked by the reader that if a sequence of points is connected with scheme $(\mathcal T, \CE, t)$, then the sequence is $n$-connected.
An example for the construction of $\mathcal T$ together with the type function is given in figure \ref{createtre}.
\end{rem}

\begin{figure}[hlt]
\begin{center}
\leavevmode %\epsfysize =5 cm
\epsfxsize =10 cm
\psfragscanon
\psfrag{w1}{$\gamma_1$}
\psfrag{w2}{$\gamma_2$}
\psfrag{w3}{$\gamma_3$}
\psfrag{w4}{$\gamma_4$}
\psfrag{x1}{$x_1$}
\psfrag{x2}{$x_2$}
\psfrag{x3}{$x_3$}
\psfrag{y1}{$y_4$}
\psfrag{y2}{$y_5$}
\psfrag{y3}{$y_6$}
\psfrag{1}{$1$}
\psfrag{2}{$2$}
\psfrag{3}{$3$}
\psfrag{4}{$4$}
\psfrag{A1}{$A_1$}
\psfrag{A2}{$A_2$}
\psfrag{A3}{$A_3$}
\psfrag{B1}{$A_4$}
\psfrag{B2}{$A_5$}
\psfrag{B3}{$A_6$}
\epsfbox{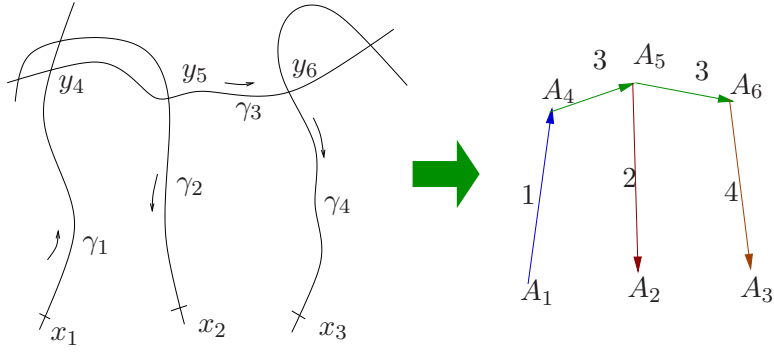}
\end{center}
\caption{\label{createtre}
Examples of the process of tree creation when $k=3$ and $n_1=4$. On the left, the $n_1$ oriented trajectories are represented together with the $x$s and
the points of intersection of the trajectories. On the right this is encoded in the corresponding tree.}
\end{figure}

\begin{proof}
 We prove the statement by induction on $k$.
If $k=2$ and $x_1$ and $x_2$ are strictly connected by $(\gamma^i)_{i=1}^n$,
then it is possible (changing the order of the $w^i$ if necessary) to find $(y_i)_{3\le i\le n+1}$ such that $y_i\in \gamma^{i-2}\cap \gamma^{i-1}$ and
$x_1\in \gamma^1$, $x_2\in \gamma^{n}$.
Then the tree $\mathcal T$ is just the paths $A_1 A_3 A_4 \dots A_{n+1}A_2$ and the edge $A_iA_{i+1}$ has type $i-1$
($A_1A_3$ is of type $1$ and $A_{n+1}A_2$ is of type $n$). Orientation of the edges can then be chosen to satisfy $(iii)$.

\medskip

For $k \ge 3$, we remark that if  $(\gamma_i)_{i=1}^n$ strictly connects $(x_i)_{i=1}^{k}$, then it connects $(x_i)_{i=1}^{k-1}$.
Thus one can find a subsequence of trajectories that strictly connects  $(x_i)_{i=1}^{k-1}$. Hence
after reordering of the indices, one may assume that $(\gamma_i)_{i=1}^{n'}$ for $n'\le n$ strictly connects  $(x_i)_{i=1}^{k-1}$.

\medskip

Using the induction hypothesis one can find a tree $\mathcal T'$ with
$k+n'-2$ vertices $(A_i)_{i\in[1,n'+k-1]\setminus k}$ and a sequence of  $\bbZ^d$ vertices $(y_i)_{i\in [0,n'+k-1]\setminus \{k\}}$,
that satisfies $(i)-(iii)$ (the label $k$ is not used here for a reason that will become clear soon).

\medskip

Assume for the rest of the proof that $n'<n$ (the case $n'=n$ is treated briefly at the end).
Note that since $(\gamma_i)_{i=1}^n$, strictly connects $(x_i)_{i=1}^{k}$, one can find $y_{n'+k}$ in the trace of one of the $(\gamma_i)_{i\le n'}$
(without loss of generality we can assume it belongs to $\tr(\gamma_{n'})$), such that $y_{n'+k}$ and $x_k$ are strictly connected by $(\gamma_i)_{i=n'+1}^k$.

We are now ready to construct the tree $\mathcal T$.
First we construct a path $$ A_{n'+k}A_{n'+k+1}\dots A_{n+k+1}A_{k} $$ composed of $n'-n$ edges of different types ($n'+1$ to $n$), just as one did for the
$k=2$ case.

Then one plugs $A_{n'+k}$ into the old tree $\mathcal T'$ as follows.
Let $A_{a_1}, \dots, A_{a_l}$, $l\ge 2$ be the path of vertices linked by edges of type  $n'$. By $(iii)$ of the induction hypothesis, there exists
a non-decreasing sequence in $\bbZ$, $b_1, \dots, b_l$ such that $\gamma^{n'}_{b_i}=y_{a_i}$ for all $i\in [1,l]$.
By definition $y_{n'+k}=\gamma^{n'}_b$ for some $b\in \bbZ$.

One then constructs $\mathcal T$ from $\CT'$ by adding a new edge of type $n_2$ to include $A_{k+n_2}$ in the tree in the following manner.
\begin{itemize}
\item[(a)] if $b\le b_1$, one adds an edge $A_{k+n'}A_{a_1}$ (and the path $A_{n'+k}A_{n'+k+1}\dots A_{n+k-1}A_k$ previously constructed),
\item[(b)] if $b\in (b_i,b_{i+1}]$ then one replaces the edge $A_{a_i}A_{a_{i+1}}$ by two edges $A_{a_i}A_{n_2+k}$ and $A_{n_2+k}A_{a_{i+1}}$,
\item[(c)] if $b> b_l$ then one adds an edge $A_{a_{l}}A_{n_2+k}$.
\end{itemize}
When $n'=n$ the procedure is exactly the same except that $y_{n'+k}$ is replaced by $x_k$ (and $A_{n'+k}$ by $A_k$) and
that only the second stage is needed (the paths to be plugged is only the single point $A_k$ in this case).
We let the reader check that assumptions $(i)-(iii)$ are satisfied by $\cT$.
\end{proof}

\medskip

According to Proposition \ref{lesarbresccool}, one has
\begin{equation}
 \{(x_i)_{i=1}^k, \text{ is $n$-connected }\}=
\cup_{\CT \in \bbT_n} \{(x_i)_{i=1}^k \text{ is connected with scheme } \mathcal T\},
\end{equation}
where $\bbT_n$ denotes the (finite) set of all schemes $\cT$ with less than $n+k-1$ vertices.
Thus, to prove Proposition \ref{lowb}, we only need to prove that for every $\cT\in \bbT_n$,

\begin{equation}\label{usethescheme}
 \BP\left[ (x_i)_{i=1}^k \text{ is connected with scheme } \mathcal T\right] \le C  \max_{i\ne j}|x_i-x_j|^{-1+\gep}.
\end{equation}

For this purpose we will use the following Lemma that estimates the l.h.s.\ of \eqref{usethescheme}.

\begin{lem}\label{feyndiag}
Let $\CE$ denote the set of edges of $\mathcal T$, a tree with $n+k-1$ vertices. Then
\begin{equation}\label{diagsum}
 \BP\left[ (x_i)_{i=1}^k \text{ is connected with scheme } \mathcal T\right] \le C
\sum_{(y_i)_{i=k+1}^{n+k-1}\in (\bbZ^d)^{n-1}}\prod_{A_iA_j\in \CE}(|y_i-y_j|+1)^{2-d}.
\end{equation}
\end{lem}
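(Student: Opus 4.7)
The plan is to reduce to a first moment computation on $\go_u$ via the factorial moment formula for Poisson point processes, then estimate the resulting $\nu$-masses through SRW hitting probability bounds. First, a union bound over the positions of the auxiliary vertices gives
\begin{equation*}
\BP\!\left[(x_i)_{i=1}^k \text{ is connected with scheme } \cT\right] \le \sum_{(y_i)_{i=k+1}^{n+k-1}\in(\bbZ^d)^{n-1}} \BP\!\left[(x_i)_{i=1}^k \text{ is connected with scheme } \cT \text{ using } (y_i)\right],
\end{equation*}
so it is enough to bound each summand by $C\prod_{A_iA_j\in\CE}(|y_i-y_j|+1)^{2-d}$ uniformly in $(y_i)$, with the convention $y_i = x_i$ for $i\le k$.

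For fixed $(y_i)$, write $z^h_s := y_{a^h_s}$ for the positions along the oriented type-$h$ path of $\cT$, and let $B_h \subset W^*$ be the set of trajectories $\gamma$ for which there exist $b_1\le\cdots\le b_{l_h}$ with $\gamma_{b_s}=z^h_s$. The event in question asserts the existence of \emph{distinct} $\gamma^1,\dots,\gamma^n\in\go_u$ with $\gamma^h\in B_h$ for each $h$; Markov's inequality together with the factorial moment formula for the Poisson point process $\go_u$ of intensity $u\nu$ then yields
\begin{equation*}
\BP[\text{event}] \le \BE\bigg[\sum_{\substack{(\gamma^1,\dots,\gamma^n)\\ \text{distinct in }\go_u}}\prod_{h=1}^n \mathbf 1_{B_h}(\gamma^h)\bigg] = \prod_{h=1}^n u\,\nu(B_h).
\end{equation*}

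The crucial step is the estimate $\nu(B_h) \le C\prod_{s=1}^{l_h-1}(|z^h_s-z^h_{s+1}|+1)^{2-d}$. I apply the defining identity $\mathbf 1_{W_K^*}\nu=\pi^*\circ Q_K$ from \eqref{e:nudef} with $K=\{z^h_1\}$: under $Q_{z^h_1}$ the trajectory has $\gamma_0=z^h_1$, the positive half $(\gamma_n)_{n\ge 0}$ is a SRW from $z^h_1$, and the negative half $(\gamma_{-n})_{n\ge 1}$ is a SRW from $z^h_1$ conditioned on $\tilde H_{z^h_1}=\infty$, so the negative half never revisits $z^h_1$. This asymmetry forces any order-respecting sequence $b_1\le\cdots\le b_{l_h}$ with $\gamma_{b_s}=z^h_s$ to satisfy $b_s\ge 0$ for all $s$, reducing the event to the forward SRW from $z^h_1$ visiting $z^h_2,\dots,z^h_{l_h}$ in order. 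Successively applying the strong Markov property at the first hitting times and using \eqref{e.hitbounds} then gives
\begin{equation*}
\nu(B_h) \le e_{z^h_1}(z^h_1)\prod_{s=1}^{l_h-1} P_{z^h_s}[H_{z^h_{s+1}}<\infty] \le C\prod_{s=1}^{l_h-1}(|z^h_s-z^h_{s+1}|+1)^{2-d}.
\end{equation*}

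Multiplying these estimates over $h$ and using that $\CE$ is the disjoint union of the type classes, the right-hand side collapses to $\prod_{A_iA_j\in\CE}(|y_i-y_j|+1)^{2-d}$, which completes the per-$(y_i)$ bound and yields \eqref{diagsum}. The main obstacle is the reduction in the previous paragraph: one must check cleanly that the doubly-infinite ordering constraint under $Q_{\{z^h_1\}}$ collapses to a forward SRW hitting-in-order event with no correction from negative-time excursions, and the condition $\tilde H_{z^h_1}=\infty$ on the backward half is exactly what ensures this.
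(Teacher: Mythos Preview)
Your proof is correct and follows essentially the same route as the paper: a union bound over the auxiliary vertex positions, then a Poisson first-moment bound yielding $\prod_h u\,\nu(B_h)$, and finally the $\nu$-mass estimate via the $Q_{\{z^h_1\}}$ representation combined with the Markov property and \eqref{e.hitbounds}. The only cosmetic difference is that you invoke the factorial moment formula for Poisson point processes where the paper uses an informal infinitesimal decomposition $\go_{\dd t}$; these are equivalent ways of obtaining the same product bound, and your phrasing is arguably cleaner.
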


\begin{proof}
By a simple union bound it is sufficient to prove that
\begin{equation}\label{croco}
\BP\left[ x_1, \dots, x_k \text{ are connected with scheme } \mathcal T \text{ using } (y_i)_{i=k+1}^m\right]\le
C \prod_{A_iA_j\in \CE}(|y_i-y_j|+1)^{2-d}.
\end{equation}
We prove equation \eqref{croco} in two steps.
First we show that given subsets $E_1,\dots,E_n$ of $W^*$ with finite $\nu$-measure, one has

\begin{equation}\label{croco1}
 \BP[\exists (\gamma^i)_{i=1}^n \in (\go_u)^n,\  \forall i\ne j,\  \gamma^i\ne \gamma^j,\ \forall i,\ \gamma^i \in E_i]\le u^n \prod_{i=1}^n\nu(E_i).
\end{equation}
 Indeed let $\go_{\dd t}=\go_{t,t+\dd t}$ denote infinitesimal division of the Poisson process.
One has
\begin{multline}
  \BP[\exists (\gamma^i)_{i=1}^n \in (\go_u)^n,\  \forall i\ne j,\  \gamma^i\ne \gamma^j,\ \forall i,\  \gamma^i \in E_i]\\
\le \int_{\{ (t_i)_{i=1}^n \in [0,u]^n \ | \ \forall i\ne j \ t_i\ne t_j \}} \BP[\forall i\in [1,n] \  \go_{\dd t_i}\cap E_i\ne \emptyset]\\
= \int_{\{ (t_i)_{i=1}^n \in [0,u]^n \ | \ \forall i\ne j \ t_i\ne t_j \}}\prod_{i=1}^n  (u \nu(E_i)\dd t_i),
\end{multline}
the last equality being obtained using independence.
Secondly we show that for any choice of points $(z_i)_{i=1}^m$ one has
\begin{equation}\label{aswx}
 \nu(\{\gamma\,:\,\gamma \text{ visits } z_1,z_2,\dots,z_m \text{ in that order }\})
\le C_{m} \prod_{i=1}^{m-1}\frac{1}{(|z_{i+1}-z_{i}|+1)^{d-2}}.
\end{equation}
Parameterizing $\gamma=(\gamma_n)_{n\ge 0}$ so that $0$ is the first time of visit of $z_1$ and using the definition of $\nu$ given by
\eqref{e:Qdef}-\eqref{e:nudef} one has
\begin{multline}
  \nu(\{\gamma\,:\,\gamma \text{ visits } z_1,z_2,\dots,z_m \text{ in that order }\})\\
=P_0(\tilde H_0=\infty)P_{z_1}(\exists n_2\le n_3\le \dots\le n_m, \forall i\in [2,m],\ X_{n_i}=y_{n_i})\\
=P_0(\tilde H_0=\infty)\prod_{i=1}^{m-1}P_{z_i}(H_{z_{i+1}}< \infty),
\end{multline}
where the last inequality follows by multiple application of the Markov property at the successive stopping times $H_{z_i}$.
Then \eqref{aswx} is deduced by using \eqref{e.hitbounds}.
\medskip

Combining \eqref{aswx} with \eqref{croco1} used for the events $E_i:=\{ \gamma^i \text{ visits successively } y_{a^i_1}, \dots, y_{a^i_m}\}$ where
$A_{a^i_1}A_{a^i_2}\dots A_{a^i_m}$ are the paths corresponding to oriented edges of type $i$ in $\mathcal T$ we get \eqref{croco}.

\end{proof}

Our problem is that for some schemes in $\bbT_n$, the r.h.s\ of \eqref{diagsum} diverges.
Therefore, we must first identify which are the bad trees for which that happens and prove \eqref{usethescheme} for them without using
\eqref{diagsum}.
Afterwards, we use the following proposition that gives an upper-bound for the r.h.s. of \eqref{diagsum} for the good trees,
 and allow us to conclude.

\begin{prop}\label{feynman}

Given a labeled tree $\mathcal T$ with $k$ leafs $A_1,\dots, A_k$ and $m$ nodes $A_{k+1},\dots, A_{k+m}$ and edges $E$, we associate to each edge a length $l(e)\in [0,d)$.
Suppose that the lengths of the edges are such that:

\begin{itemize}
 \item [(i)] The total length of the tree $l(\mathcal T)=\sum_{e\in E} l(e)$ is strictly smaller than $d(k-1)$.
 \item [(ii)] The length of any (strict) subtree containing at least $k_1$ of the original leafs $A_i$ is at least $d(k_1-1)$.
\end{itemize}
Then for any $\gep>0$ there exists a $C_\gep$ such that, for every  $x_1, \dots, x_k$
\begin{equation}\label{ingsurlesarb}
\sum_{(y_i)_{i=k+1}^{k+m}\in (\bbZ^d)^m}\prod_{A_iA_j\in E}(|y_i-y_j|+1)^{l(A_iA_j)-d}\le C_{\gep}\max_{i\ne j}|x_i-x_j|^{d(k-1)-l(T)+\gep}.
\end{equation}
where we use the convention that $y_i=x_i$ for $i\le k$.
\end{prop}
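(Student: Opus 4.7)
The plan is to prove Proposition~\ref{feynman} by induction on the number $m$ of internal vertices of $\mathcal T$. The base case $m=0$ forces $k=2$ and $\mathcal T$ is a single edge of length $l\in[0,d)$, so the left-hand side reduces to the single term $(|x_1-x_2|+1)^{l-d}$ and the bound is immediate.

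For the inductive step, the first task is to locate an internal vertex $A_v$ that can be summed out cleanly. I claim that condition (i) implies the existence of an internal vertex $A_v$ with a \emph{summable star}, meaning $\sum_{e\ni A_v}l(e)<d(\deg(A_v)-1)$. Indeed, if every internal $v$ violated this, summing the reverse inequalities over all internal vertices (and classifying edges according to whether they have one or two internal endpoints) would force $l(\mathcal T)\ge d(k-1)$, contradicting (i). Fix such an $A_v$, label its neighbours $z_1,\dots,z_q$ (each being either an external vertex $x_i$ or a still-pending summation variable $y_w$) and the corresponding edge lengths $l_1,\dots,l_q$. I would then control the sum
$$\sum_{y_v\in\mathbb Z^d}\prod_{s=1}^q (|y_v-z_s|+1)^{l_s-d}$$
by a Voronoi decomposition of $\mathbb Z^d$ relative to $\{z_1,\dots,z_q\}$: in the cell of $z_{i_0}$, the triangle inequality gives $|y_v-z_s|\gtrsim|z_{i_0}-z_s|$ for $s\ne i_0$, and a dyadic decomposition of $|y_v-z_{i_0}|$ combined with the summability $\sum_s l_s<d(q-1)$ controls the tail (including the unbounded cells of ``extremal'' $z_{i_0}$). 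The resulting estimate has the form
$$C\sum_{i_0=1}^q\prod_{s\ne i_0}(|z_{i_0}-z_s|+1)^{l_s-d}\cdot R^{l_{i_0}+\epsilon'},$$
with $R=\max_{i\ne j}|x_i-x_j|$.

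Each term $i_0$ is then a diagrammatic sum on a strictly smaller tree $\mathcal T_{i_0}$, obtained from $\mathcal T$ by deleting $A_v$ and attaching its former neighbours directly to $z_{i_0}$, multiplied by the polynomial factor $R^{l_{i_0}+\epsilon'}$. The new tree has $m-1$ internal vertices and total length $l(\mathcal T_{i_0})=l(\mathcal T)-l_{i_0}$. I would verify that $\mathcal T_{i_0}$ still satisfies (i)--(ii): (i) is immediate from the new total length, and (ii) follows from a combinatorial check that every strict subtree of $\mathcal T_{i_0}$ lifts to a strict subtree of $\mathcal T$ containing the same original leaves, whose length exceeds by at most $l_{i_0}$. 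Applying the inductive hypothesis to each $\mathcal T_{i_0}$ and combining with the $R^{l_{i_0}}$ factor produces the target bound on $\mathcal T$, after absorbing the finitely many $\epsilon'$ losses into $\epsilon$.

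The main obstacle is the star estimate at $A_v$: controlling the inner sum uniformly in the configuration of $\{z_1,\dots,z_q\}$ requires careful handling of the Voronoi cells of ``extremal'' $z_{i_0}$ lying on the boundary of the convex hull of $\{z_s\}$, since these cells are unbounded in $\mathbb Z^d$; here the star summability $\sum_s l_s<d(q-1)$ is crucially used together with a dyadic tail estimate. A secondary technicality is the verification that conditions (i) and (ii) propagate through the contraction $A_v\mapsto z_{i_0}$, which reduces to a small combinatorial lemma relating subtrees of $\mathcal T_{i_0}$ to those of $\mathcal T$.
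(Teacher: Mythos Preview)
Your approach (induction on the number $m$ of internal vertices, summing out a vertex with summable star) is genuinely different from the paper's (induction on the number $k$ of leaves, first contracting degree-$2$ vertices and then splitting off a ``cherry'' of two adjacent leaves via the elementary inequality
$a^{l_1-d}b^{l_2-d}\le a^{l_1+l_2-2d}+b^{l_1+l_2-2d}$).
However, as written your argument has two real gaps, and they are exactly where the paper's approach buys something.

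\textbf{The star bound with $R^{l_{i_0}}$ is not correct.} You claim
\[
\sum_{y_v}\prod_{s=1}^q(|y_v-z_s|+1)^{l_s-d}\;\le\;C\sum_{i_0}\Bigl(\prod_{s\ne i_0}(|z_{i_0}-z_s|+1)^{l_s-d}\Bigr)R^{\,l_{i_0}+\epsilon'},
\]
with $R=\max_{i\ne j}|x_i-x_j|$. But the neighbours $z_s$ of $A_v$ are in general \emph{internal} summation variables $y_w$, not the fixed external points $x_i$; they range over all of $\mathbb Z^d$. In the Voronoi cell of such a $z_{i_0}$ the near-field sum $\sum_{|y_v-z_{i_0}|\le\rho}(|y_v-z_{i_0}|+1)^{l_{i_0}-d}$ is of order $\rho^{l_{i_0}}$ with $\rho=\min_{s\ne i_0}|z_{i_0}-z_s|$, and there is no reason for $\rho\le R$. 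Replacing $R^{l_{i_0}}$ by the correct local factor $\rho^{l_{i_0}}$ does not give a clean diagrammatic sum for $\mathcal T_{i_0}$: absorbing it into the nearest edge can push that edge length past $d$.

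\textbf{Condition (ii) need not survive the contraction.} Your own sentence (``lifts to a strict subtree \dots\ whose length exceeds by at most $l_{i_0}$'') shows precisely that (ii) can fail by $l_{i_0}$, not that it holds. Concretely, take $d=5$, leaves $A_1,A_2,A_3$, internal $A_4,A_5$, edges $A_1A_4,A_2A_4,A_4A_5,A_5A_3$ with lengths $3,3,1,1$. Then (i) and (ii) hold for $\mathcal T$, and $A_5$ has a summable star. Contracting the edge $A_4A_5$ (so $i_0$ corresponds to the internal neighbour $A_4$) yields a $3$-star with edge lengths $3,3,1$; the subtree $A_1\!-\!A_4\!-\!A_3$ then has length $4<d$, so (ii) fails and the inductive hypothesis does not apply.

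The paper avoids both problems by always acting at the \emph{leaves}: after the degree-$2$ reduction it locates two leaves $A_{k-1},A_k$ adjacent to the same internal vertex, and the pointwise inequality above is applied with $x_{k-1},x_k$ fixed. Because the manipulated points are external, no uncontrolled $y$-variables enter the estimate, and because two original leaves are merged into one, condition (ii) transfers directly (any subtree of the reduced tree containing the new merged edge lifts to a subtree of the old tree with one more original leaf and exactly $d$ more length).
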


The proof is postponed to the end of the section.
\medskip

\begin{proof}[Proof of Proposition \ref{lowb}]
The statement is proved by induction on $k$. The case $k=2$ can easily be proved using Proposition \ref{feynman}.
So we only need to focus on the induction step. It is necessary to prove \eqref{usethescheme} for all trees with $k+n-1$ vertices.

\medskip

First consider the trees where there exists $i\le k$ such that $A_i$ is not a leaf (after permutation of the indices we can consider that $A_1$ is not a leaf).
In that case $A_1$ has degree two and the tree $\mathcal T$ can be split into two trees, each of them linking $k_1$ and $k_2$
of the $A_i$s together, and using respectively $n_1$ and $n_2$ types of edges respectively, with $k_1+k_2=k+1$ and $n_1+n_2=n+1$
(recall that the two edges getting out of $A_1$ are of the same type).

\medskip

As $n<n(k,d)$, one has either $n_1<n(k_1,d)$ or $n_2<n(k_2,d)$.
Suppose without loss of generality that $n_1<n(k_1,d)$. In that case a subset of $k_1<k$ vertices is connected by $n_1<n(k_1,d)$ trajectories
(see Remark \ref{remkitu}) and
one can use the induction hypothesis to get \eqref{usethescheme}.
In the rest of the proof we consider only trees for which all the $A_i$, $i\le k$ are leafs.

\medskip

 A connected subgraph of $\CT$ which is a tree and whose leafs are leafs of $\cT$ is said to be a proper subtree of $\mathcal T$.
We consider now the trees $\cT$  with $k+n-1$ vertices that have a proper subtree with $k_1$ vertices and that uses only edges of $n_1$ different types
with $n_1<n(k_1,d)$. Then according to Remark \ref{remkitu}, a subset of $k_1<k$ vertices is connected by $n_1<n(k_1,d)$ trajectories and again one can
prove \eqref{usethescheme} using the induction hypothesis.

\medskip

Now suppose that $\cT$ is a tree for which all subtrees with $k_1<k$ vertices use at least $n(k_1,d)$ type of edges.
To each edge of the tree, we associate an edge-length $2$, and apply Proposition \eqref{feynman} to conclude. Assumption $(i)$ of the proposition
is satisfied since $n<n(k,d)$ and the total number of edges $n+k-2$ is given by Proposition \ref{lesarbresccool}.
Assumption $(ii)$ is satisfied because of our assumption on proper subtrees,
indeed the reader can check that if a proper subtree with $k_1$ vertices uses $n_1$ type of edges, it must have at least $n_1+k_1-2$ edges:
this is because vertices in the tree have degree at most $4$ and that on vertices of degree $3$ two of the incident edges have the same type, and
on vertices of degree $4$, one has two pairs of incident edges with the same type (by $(ii)$ of Proposition \ref{lesarbresccool}).
\end{proof}

\begin{proof}[Proof of Proposition \ref{feynman}]
We perform the proof by induction on $k$.
When $k=2$, it is easy to show that the sum is equal to
$O(|x_1-x_2|^{l(T)-d}(\log |x_1-x_2|)^{\#\{ \text{edges of length $0$} \}})$
where $l(T)$ is the length of the tree.

\medskip

When $k\ge 3$ our strategy is to bound the r.h.s\ of \eqref{ingsurlesarb} by sums corresponding to trees with $k-1$
vertices and
then conclude by using the induction hypothesis.

We remark that if $T$ includes two edges $e$ and $e'$ linked to a common vertex of degree two, one can replace it by a unique
edge of length $l(e')+l(e)+\delta$ (see Figure \ref{treetransform}). Indeed as long as $l(e')+l(e)<d$ we have
\begin{equation}\label{onrajoutdelta}
\sum_{y\in {\mathbb Z}^d} (|x-y|+1)^{l(e)-d} (|y-z|+1)^{l(e')-d}=O( (1-|x-z|)^{l(e)+l(e')+\delta-d}).
\end{equation}
So if one calls $T_1$ the tree obtained after this change (relabeling the vertices of $T_1$ from $A_1,\dots,A_{k+m-1}$, calling $E_1$
the corresponding edge set and for simplicity denote by $l$ the length of the edges on the new tree) one get that there exists a constant $C$ such that

\begin{equation}
\sum_{(y_i)_{i=k+1}^{k+m}\in (\bbZ^d)^m}\prod_{A_iA_j\in E}(|y_i-y_j|+1)^{l(A_iA_j)-d}\le C
\sum_{(y_i)_{i=k+1}^{k+m-1}\in (\bbZ^d)^m}\prod_{A_iA_j\in E_1}(|y_i-y_j|+1)^{l(A_iA_j)-d}.
\end{equation}

Note that adding the $\delta$ is only necessary if one of the edges has length zero in order to avoid having a $\log$ term.
Also note that one can choose the $\delta$ small enough so that after this transformation $l(T_1)\le d(k-1)$.
In particular, this implies that all the edges are still of length smaller than $d$.

\begin{figure}[hlt]
\begin{center}
\leavevmode %\epsfysize =5 cm
\epsfxsize =10 cm
\psfragscanon
\psfrag{A}{$A$}
\psfrag{B}{$B$}
\psfrag{l1}{$l_1$}
\psfrag{l2}{$l_2$}
\psfrag{l3}{$l_1+l_2+\delta$}
\psfrag{l4}{$l_1+l_2-d$}
\epsfbox{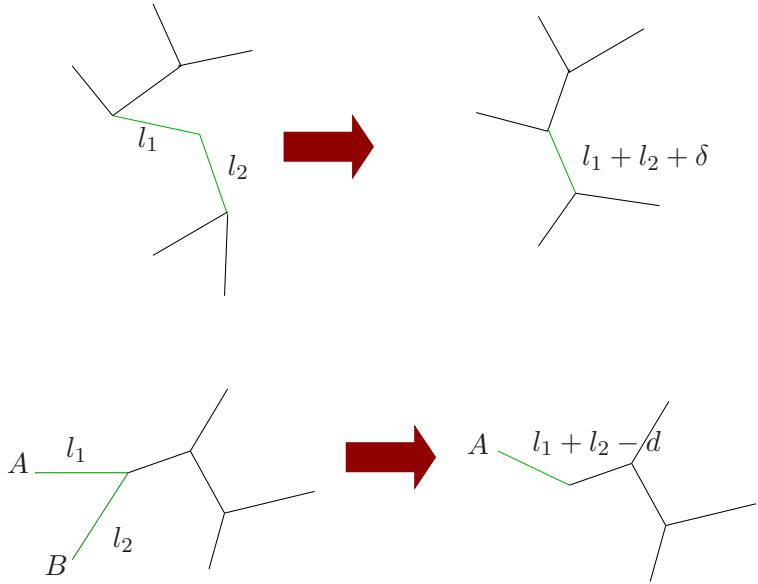}
\end{center}
\caption{\label{treetransform}
Illustration of the two stages of the tree reduction procedure}
\end{figure}

\medskip

Then after having reduced all consecutive edge in this manner we obtain
(what we call the first stage of the reduction) a tree $T'$ with $k+m'$
vertices ($m'\le m$) and $k$ leaves, no vertices
of degree $2$, and satisfying
\begin{equation}
\sum_{(y_i)_{i=k+1}^{k+m}\in (\bbZ^d)^m}\prod_{A_iA_j\in E}(|y_i-y_j|+1)^{l(A_iA_j)-d}\le C
\sum_{(y_i)_{i=k+1}^{k+m'}\in (\bbZ^d)^m}\prod_{A_iA_j\in E'}(|y_i-y_j|+1)^{l(A_iA_j)-d}.
\end{equation}
We can chose the $\delta$ small enough so that $l(T_1)\le l(T)+\gep/2$.
\medskip

After the first stage of the reduction, it is possible to find in $T'$ two leafs at graph distance $2$ of each another
(i.e.\ separated by only two edges):
say without loss of generality that $A_k$ and $A_{k-1}$ are linked to $A_{k+1}$ with edges $A_kA_{k+1}$ and $A_{k+1}A_{k-1}$
 of length $l_1$ resp. $l_2$.
We consider the inequality

\begin{equation}\label{croasfer}
 (x_k-y_{k+1})^{l_1-d} (x_{k-1}-y_{k+1})^{l_2-d}\le (x_k-y_{k+1})^{l_1+l_2-2d}+(x_{k-1}-y_{k+1})^{l_1+l_2-2d}.
\end{equation}

Let $T''_1$ and
$T''_2$ be trees with $k-1$ leafs, obtained by replacing the edges $e$ and $e'$ in $T'$ by a unique edge $e''$ of length $l_1+l_2-d\ge 0$ linking
$A_{k+1}$ and $A_{k}$ resp.\ $A_{k+1}$ and $A_{k-1}$ and deleting the vertex left alone ($A_{k-1}$ resp. $A_k$).
Indeed using \eqref{croasfer} one gets that
\begin{multline}
\sum_{(y_i)_{i=k+1}^{k+m'}\in (\bbZ^d)^{m'}}\prod_{A_iA_j\in E'}(|y_i-y_j|+1)^{l(A_iA_j)-d}\le
\sum_{(y_i)_{i=k+1}^{k+m'}\in (\bbZ^d)^{m'}}\prod_{A_iA_j\in E''_1}(|y_i-y_j|+1)^{l(A_iA_j)-d}\\
+
\sum_{(y_i)_{i=k+1}^{k+m'}\in (\bbZ^d)^{m'}}\prod_{A_iA_j\in E''_1}(|y_i-y_j|+1)^{l(A_iA_j)-d}.
\end{multline}
where $E_1''$ and $E_2''$ denote the edge sets of $T_1''$ and $T_2''$ respectively.

Note that for $i=1,2$, $l(T''_i)=l(T')-d\le l(T)-d+\gep/2$ so that condition $(i)$ is satisfied if $\gep$ is small enough (the new tree has one less leaf).
Note that any proper subtree of $T''$ that does not contain $e''$ is also a proper subtree of $T'$
and any proper subtree $\tau$ of $T''$ that contains $e''$ can  be associated to a subtree $\tau'$ of $T'$ by replacing $e''$ by $e$ and $e'$
(the inverse of the
above transformation)
such that
$l(\tau')=l(\tau)+d$ and $\tau'$ has one more leaf than $\tau$. Hence if condition $(ii)$ is satisfied for $T'$ it is also satisfied for $T''$
so that one can apply the induction hypothesis (with $\gep/2$) on the trees $T''_1$ and $T''_2$  to conclude.

\end{proof}

{\bf Acknowledgments:} The paper is based on a question asked by the first author to
the second author after a seminar given by the second author at Instituto Nacional de Matematica Pura e Aplicada in
Rio de Janeiro. We thank Roberto Imbuzeiro Oliveira for organizing the seminar. JT thanks Vladas Sidoravicius
for hosting his postdoctoral stay at Instituto Nacional de Matematica Pura e Aplicada (IMPA) in Rio de Janeiro.
Both authors acknowledge support of CNPq and hospitality of IMPA.

\bibliography{tykesson}
\bibliographystyle{alpha}
\end{document}